\newenvironment{proof}{\par \noindent {\bf Proof}.\ }{\hfill$\Box$ \par \vspace{11pt}}
\newenvironment{proofof}[1]{\par \noindent {{\bf Proof} of #1.}}{\hfill$\Box$ \par \vspace{11pt}}
\begin{document}
\begin{frontmatter}
  \title{On triangles in $K_r$-minor free graphs\thanksref{ANR-EGOS}}
  \author[I3M]{Boris Albar},
  \ead{boris.albar@lirmm.fr}
  \author[LIRMM]{Daniel Gonçalves}
  \ead{daniel.goncalves@lirmm.fr}
  \address[I3M]{I3M \& LIRMM, CNRS and Univ. Montpellier 2, Place Eugène Bataillon, 34095
    Montpellier Cedex 5, France}
  \address[LIRMM]{LIRMM, CNRS and Univ. Montpellier 2, 161 rue Ada, 34095
    Montpellier Cedex 5, France}
  \thanks[ANR-EGOS]{This work was partially supported by the ANR grant
    EGOS 12 JS02 002 01}

  \begin{abstract}
    We study graphs where each edge adjacent to a vertex of small
    degree ($7$ and $9$, respectively) belongs to many triangles ($4$ and
    $5$, respectively) and show that these graphs contain a complete
    graph ($K_6$ and $K_7$, respectively) as a minor. The second case
    settles a problem of Nevo (Nevo, 2007). Morevover if each edge of
    a graph belongs to $6$ triangles then the graph contains a $K_8$-minor
    or contains $K_{2,2,2,2,2}$ as an induced subgraph. We then show
	applications of these structural properties to stress freeness and
	coloration of graphs. In particular, motivated by Hadwiger's conjecture,
	we prove that every $K_7$-minor free graph is $8$-colorable
	and every $K_8$-minor free graph is $10$-colorable.
  \end{abstract}
\end{frontmatter}

\section{Introduction}\label{sec:intro}

A minor of a graph $G$ is a graph obtained from $G$ by a succession of
edge deletions, edge contractions and vertex deletions.
All graphs we consider are simple, i.e. without loops or multiple edges.
The following theorem of Mader~\cite{mader1} bounds the number of edges in a
$K_r$-minor free graph.
\begin{thm}[Mader, 1968,~\cite{mader1}]
For $3 \leq r \leq 7$, any $K_r$-minor free graph $G$ on $n\ge r$
vertices has at most $(r-2)n - {{r-1}\choose{2}}$ edges.
\end{thm}
Note that since $|E(G)| = \frac{1}{2} \sum_{u \in  V(G)}\,\deg(u)$,
this theorem implies that every $K_r$-minor
free graph $G$, for $3 \leq r \leq 7$, is such that $\delta(G)\le
2r-5$, where $\delta(G)$ denotes the minimum degree of $G$.
This property will be of importance in the following. We are
interested in a sufficient condition for a graph to
admit a complete graph as a minor, dealing with the minimum number
of triangles each edge belongs to. Nevo~\cite{nevo1} already studied
this problem for small cliques. In the following, we assume
that every graph has at least one edge.
\begin{thm}[Nevo, 2007,~\cite{nevo1}]
For $3 \leq r \leq 5$, any $K_r$-minor free graph $G$ has an edge that
belongs to at most $r - 3$ triangles.
\label{th:nevoleq6}
\end{thm}
He also gave a weaker version for $K_6$-minor free graphs.
\begin{thm}[Nevo, 2007,~\cite{nevo1}]
Any $K_6$-minor free graph $G$ has an edge that belongs to at
most $r - 3$ triangles, or is a clique-sum over $K_r$, $r \leq 4$.
\label{th:nevok6}
\end{thm}

Nevo has conjectured that Theorem \ref{th:nevok6} can be extended to
the case of $K_7$-minor free graphs. We improve
Theorems~\ref{th:nevoleq6} and \ref{th:nevok6} in the following way.

\begin{thm}\label{th:krtri}
For $3 \leq r \leq 7$, any $K_r$-minor free graph $G$ has
an edge $uv$ such that $\deg(u) \leq 2r-5$ and $uv$ belongs to at
most $r-3$ triangles.
\end{thm}

In particular, this answers Nevo's conjecture about $K_7$-minor free graphs.
As pointed out by Nevo, Theorem~\ref{th:nevok6} cannot be further extended
to $K_8$-minor free graphs as such, since $K_{2,2,2,2,2}$
is a $K_8$-minor free graph whose every edge belongs to $6$
triangles. Actually, one can obtain $K_8$-minor free graphs whose every edge
belongs to $6$ triangles by gluing copies of $K_{2,2,2,2,2}$ on cliques of
any $K_8$-minor free graph. It is interesting to notice that $K_{2,2,2,2,2}$
appears in a Mader-like theorem for $K_8$-minor free graphs~\cite{jorg1}.
\begin{thm}[Jørgensen, 1994,~\cite{jorg1}]
Every graph on $n \geq 8$ vertices and at least $6n - 20$ edges either
has a $K_8$-minor, or is a $(K_{2,2,2,2,2}, 5)$-cockade (i.e. any
graph obtained from copies of $K_{2,2,2,2,2}$ by 5-clique sums).
\label{th:jorg}
\end{thm}

Although Theorem~\ref{th:krtri} cannot be extended to $K_8$-minor free
graphs, some similar conclusions can be reached by considering stronger
hypotheses. By increasing the minimum degree of the graph or
excluding $K_{2,2,2,2,2}$ as an induced subgraph, we have the following
three theorems.
\begin{thm}
Any $K_8$-minor free graph $G$ with $\delta(G)=11$ has an edge $uv$
such that $u$ has degree 11 and $uv$ belongs to at most $5$
triangles.
\label{th:k8triweak}
\end{thm}

\begin{thm}
Any $K_8$-minor free graph $G$ with $\delta(G)\ge 9$ has an edge that
belongs to at most $5$ triangles.
\label{th:k8-deg9}
\end{thm}

\begin{thm}
Any $K_8$-minor free graph $G$ with no $K_{2,2,2,2,2}$ as induced
subgraph has an edge that belongs to at most $5$ triangles.
\label{th:k8tri}
\end{thm}

We investigate applications of the previous results in the
rest of the paper.
In Section~\ref{sec:moytri}, we relax the hypothesis into a
more global condition on the overall number of triangles in
the graph. In particular, we prove that, for $3 \leq k \leq 7$
(resp. $k=8$), if a graph has $m\ge 1$ edges and at least
$\frac{k-3}{2}m$ triangles, then it has a $K_k$-minor (resp.
a $K_8$- or a $K_{2,2,2,2,2}$-minor).
In Section~\ref{sec:stress}, we show applications to stress freeness
of graphs, and settle some open problems of Nevo~\cite{nevo1}.
Finally, we show some applications to graph coloration
in Section~\ref{sec:double} and Section~\ref{sec:coloration}.
In the former section, we show an application to double-critical
$k$-chromatic graphs which settle a special case of a conjecture
of Kawarabayashi, Toft and Pedersen~\cite{kpt10}.
In the latter section, motivated by Hadwiger's conjecture,
we show that every $K_7$-minor free graph is $8$-colorable
and that every $K_8$-minor free graph is $10$-colorable.

\section{Proof of Theorem~\ref{th:krtri} for $r\le 6$ : A slight improvement of Nevo's theorem}\label{sec:proofk6}

First note that the cases $r=3$ or $4$ are trivial.  The case $r=5$ is
also quite immediate, but we need a few definitions to prove it. A
\emph{separation} of a graph $G$ is a pair $(A,B)$ of subsets of
$V(G)$ such that $A \cup B = V(G)$, $A\setminus B \neq \emptyset$,
$B\setminus A \neq \emptyset$, and no edge has one end in $A
\backslash B$ and the other in $B \backslash A$. The \emph{order} of a
separation is $|A \cap B|$.  A separation of order $k$ will be denoted
as a $k$-separation, and a separation of order at most $k$ as a $(\leq
k)$-separation. Given a vertex set $X\subseteq V(G)$ (eventually $X$
is a singleton) the sets $N(X)$ and $N[X]$ are respectively defined by
$\{y\in V(G)\setminus X \ |\ \exists x\in X\ {\rm s.t.}\ xy\in E(G)\}$
and $X\cup N(X)$.

Let us prove the case $r=5$. Consider any $K_5$-minor free graph
$G$. According to Wagner's characterization of $K_5$-minor free
graphs~\cite{w37}, $G$ is either the Wagner graph, a 4-connected
planar graph, or has a $(\le 3)$-separation $(A,B)$ such that $H=G[A]$
is either the Wagner graph or a 4-connected planar graph. If $G$ or
$H$ is the Wagner graph, as this graph has only degree 3 vertices and
no triangle, we are done. If $G$ (resp. $H$) is a 4-connected planar
graph, Euler's formula implies that  there is a vertex $v$
of degree at most 5 in $V(G)$ (resp. in $A \setminus B$). One can then observe
that, any edge around $v$ belongs to at most 2 triangles, as otherwise there would be
a separating triangle in $G$ (resp. $H$), contradicting its
4-connectivity.

Let us now focus on the case $r=6$ of Theorem~\ref{th:krtri}.
Consider by contradiction a $K_6$-minor free graph $G$ with at least
one edge, and such that every edge incident to a vertex of degree at
most $7$, belongs to at least $4$ triangles.  By Mader's theorem, we
have that $\delta(G)\le 7$.  We start by studying the properties of
$G[N(u)]$, for the vertices $u$ of degree at most 7. First, it is
clear that $G[N(u)]$ is $K_5$-minor free because otherwise there would
be a $K_6$-minor in $G$, contradicting the hypothesis.

\begin{lem}
$\delta(G)\ge 6$, and for any vertex $u$ of degree at most 7, $\delta(G[N(u)])\ge 4$.
\label{lem:mindeg6}
\end{lem}

\begin{proof}
For any vertex $u$ of degree at most 7, and any vertex of $v\in N(u)$
the edge $uv$ belongs to at least 4 triangles. The third vertex of
each triangle clearly belongs to $N(u)$ and is adjacent to $v$.  Thus
$v$ has degree at least $4$ in $G[N(u)]$.

Since for any vertex $u$ of degree at most $7$ we have
$\delta(G[N(u)])\ge 4$, $|N(u)|\ge 5$ (i.e. $\deg(u)\ge 5$).
Furthermore if there was a vertex $u$ of degree 5, as
$\delta(G[N(u)])\ge 4$, the graph $G[N(u)]$ would be isomorphic to
$K_5$, contradicting the fact that $G[N(u)]$ is $K_5$-minor free. Thus
$\delta(G)\ge 6$.
\end{proof}

As observed by Nevo (Proposition 3.3,~\cite{nevo1}), since
$|N(u)| \leq 7$, $\delta(G[N(u)]) \geq 4$ and $N(u)$ is $K_5$-minor
free, then $G[N(u)]$ is $4$-connected. Note that by Wagner's
characterization of $K_5$-minor free graphs, every $4$-connected
$K_5$-minor free is planar. Chen and Kanevsky~\cite{ck1} proved that
every $4$-connected graph can be obtained from $K_5$ and the
double-axle wheel $W_4^2$ by operations involving vertex splitting and
edge addition. Their result implies that the only two possibilities
for $G[N(u)]$ are the double-axle wheels on $4$ and $5$ vertices
depicted in Figure~\ref{fig:doublewheel}.  Note that theses two graphs
have $3|N(u)| - 6$ edges, and hence are maximal $K_5$-minor free (by
Mader's theorem).

\begin{figure}[h]
\centering
\subfigure{
\includegraphics[scale=1]{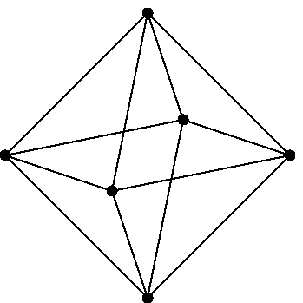}
}
\subfigure{
\includegraphics[scale=1]{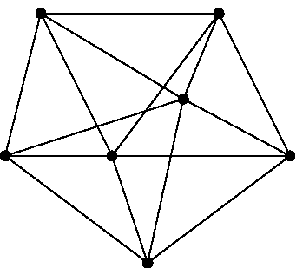}
}

\caption{The double-axle wheel on $4$ and $5$ vertices.}
\label{fig:doublewheel}
\end{figure}

We need the following lemmas on the neighborhood of the vertices with
small degree.

\begin{lem}
For any vertex $u$ of degree at most 7, every vertex $v\in N(u)$ has a
neighbor in $G\setminus N[u]$.
\label{lem:voisink6}
\end{lem}
\begin{proof}
Recall that $G[N(u)]$ is a double-axle wheel. Note that in a
double-axle wheel, every vertex has degree at most 5, and every edge
belongs to exactly 2 triangles.  Thus, every vertex of $N(u)$ has
degree at most 6 in $G[N[u]]$, and every edge of $G[N(u)]$ belongs to
exactly 3 triangles in $G[N[u]]$. This implies that any vertex $v\in
N(u)$ has either degree $>8$ in $G$, and thus at least 2 neighbors in
$G\setminus N[u]$, or that any of its incident edges $vw$ in $G[N(u)]$
is contained in a fourth triangle $vwx$, with $x\in G\setminus N[u]$.
\end{proof}

\begin{lem}
For any vertex $u$ of degree at most $7$, and any connected component
$C$ of $G\setminus N[u]$, the graph $G[N(C)]$ is a clique on at most 3
vertices.
\label{lem:voisin2k6}
\end{lem}
\begin{proof}
As $G[N(u)]$ has no clique on more than 3 vertices, let us show that
$N(C)$ does not contain two non-adjacent vertices , say $v_1$ and
$v_2$. There exists a path from $v_1$ to $v_2$ with inner vertices in
$C$.  Since $G[N(u)]$ is maximal $K_5$-minor free, this path together
with $G[N[u]]$ induces a $K_6$ minor in $G$, a contradiction.
\end{proof}

\begin{lem}
For any vertex $u$ of degree at most $7$, and any connected component
$C$ of $G\setminus N[u]$, there exists a vertex $u'\in C$ of degree at
most 7 in $G$.
\label{lem:existsk6}
\end{lem}
\begin{proof}
Suppose for contradiction that every vertex of $C$ has degree at least
8 in $G$.  Note that by definition, every vertex in $N(C)$ has a
neighbor in $C$.  Thus, as by Lemma~\ref{lem:voisin2k6} $G[N(C)]$ is a
clique on $k\le 3$ vertices, the vertices in $N(C)$ have degree at
least $k$ in $G[N[C]]$.  Thus the number of edges of $G[N[C]]$ is at
least
\[|E(G[N[C]])| \geq \frac{1}{2}(8|C| + k^2)  > 4(|C|+k) - 10\]
and by Mader's theorem, there is a $K_6$-minor in $G[N[C]]$, a
contradiction.
\end{proof}

Now choose a vertex $u$ of degree at most 7 and a connected component
$C$ of $G\setminus N[u]$, in such a way that $|C|$ is minimum.  By
Lemma~\ref{lem:existsk6}, $C$ has a vertex $v$ of degree at most 7.

Let $C_u$ be the connected component of $G\setminus N[v]$ that
contains $u$, and let $x\in N(v)\setminus N(C_u)$.  By
Lemma~\ref{lem:voisink6}, there is a connected component $C'$ of
$G\setminus N[v]$ such that $x\in N(C')$.

As $N[u]\subset N[C_u]$, it is clear that $G[C'\cup\{x,v\}]$ is a
connected subgraph of $G\setminus N[u]$. We thus have that
$C'\subsetneq C$ and thus that $|C'|<|C|$, contradicting the choice of
$u$ and $C$.  This concludes the proof of the case $r=6$ of
Theorem~\ref{th:krtri}.

\section{Proof of Theorem~\ref{th:krtri} for $r=7$ : the case of $5$ triangles}\label{sec:proofk7}

Consider by contradiction a $K_7$-minor free graph $G$ with at least
on edge, and such that every edge incident to a vertex of degree at
most $9$ belongs to at least $5$ triangles. By Mader's theorem,
$|E(G)| \leq 5|V(G)| - 15$, hence there are vertices $u$ such that
$\deg(u) \leq 9$.

We start by studying the properties of $G[N(u)]$, for any vertex $u$
of degree at most 9. First, it is clear that $G[N(u)]$ is $K_6$-minor
free because otherwise there would be a $K_7$-minor in $G$,
contradicting the hypothesis.

\begin{lem}
$\delta(G)\ge 7$, and for any vertex $u$ of degree at most 9, $\delta(G[N(u)])\ge 5$.
\label{lem:mindeg7}
\end{lem}

\begin{proof}
For any vertex $u$ of degree at most 9, and any vertex of $v\in N(u)$
the edge $uv$ belongs to at least 5 triangles. The third vertex of
each triangle clearly belongs to $N(u)$ and is adjacent to $v$.  Thus
$v$ has degree at least $5$ in $G[N(u)]$.

Since for any vertex $u$ of degree at most $9$ we have
$\delta(G[N(u)])\ge 5$, $|N(u)|\ge 6$ (i.e. $\deg(u)\ge 6$).
Furthermore if there was a vertex $u$ of degree 6, as
$\delta(G[N(u)])\ge 5$, the graph $G[N(u)]$ would be isomorphic to
$K_6$, contradicting the fact that $G[N(u)]$ is $K_6$-minor free. Thus
$\delta(G)\ge 7$.
\end{proof}

There is no appropriate theorem (contrarily to the previous case) to
generate all possible neighbourhoods of the small degree vertices.
Instead, we use a computer to generate all graphs with at most $9$
vertices and minimum degree at least 5. Then we refine (by computer)
our list of graphs, by removing the ones having a $K_6$-minor.  At the
end, we end up with a list of $22$ graphs. A difference with the
previous case is that not all the $22$ graphs are maximal $K_6$-minor
free graphs.  We deduce two of the following lemmas from the study of
$N(u)$ by computer~\cite{ag1}.

\begin{lem}
For any vertex $u$ of degree at most $9$, any connected component $C$
of $G\setminus N[u]$ is such that $|N(C)| = k \le 5$ and $|E(N(C))|\ge
{k \choose 2} -3$ (i.e. $G[N[C]]$ has at most 3 non-edges).
\label{lem:compk7}
\end{lem}

\begin{proof}
As any connected component $C$ could be contracted into a single
vertex, we prove the lemma by attaching a new vertex to all possible
combinations of $k$ vertices of $N[u]$ (as we know that $N(u)$ induces
one of the 22 graphs generated above), for any $k\le 6$, and check
when it induces a $K_7$-minor.
\end{proof}

This allows us to prove the following equivalent of
Lemma~\ref{lem:existsk6}.
\begin{lem}
For any vertex $u$ of degree at most $9$, any connected component $C$
of $G\setminus N[u]$ has a vertex $u'$ of degree at most 9 in $G$.
\label{lem:existsk7}
\end{lem}

\begin{proof}
Let $u$ be a vertex of $G$ of degree at most $9$ and let $C$ be a
connected component of $G\setminus N[u]$ which vertices have degree at
least 10 in $G$. Note that by definition every vertex of $N(C)$ has at
least one neighbor in $C$. Lemma~\ref{lem:compk7} implies that
$|N(C)|=k \leq 5$ and that $G[N(C)]$ has at most 3 non-edges. Thus,
contracting a conveniently choosen edge between $u$ and $N(C)$, one
obtains that $G[N(C)]$ has at most 1 non-edge.  After this
contraction, we have:
\begin{align*}
|E(N[C])| &\geq \frac{1}{2} \Big[ 10|C| + k(k-1) - 2 + k \Big] \\
&= 5|C| + \frac{k^2}{2} -1 > 5(|C|+k) - 15 .
\end{align*}
This contradicts the fact that $G[N[C]]$ is $K_7$-minor free, and thus
concludes the proof of the lemma.
\end{proof}

\begin{lem}
For any vertex $u$ of degree at most $9$, at most one vertex $v$ of
$N(u)$ is such that $N(v) \subseteq N[u]$.
\label{lem:numberk7}
\end {lem}

\begin{proof}
For every such vertex $v$, as $\deg(v)\le \deg(u)\le 9$, the edges
adjacent to $v$ with both ends in $N(u)$ belong to at least $5$
triangles in $G$ (i.e. belong to at least $4$ triangles in
$G[N(u)]$). We checked that for every graph in the list at most one
such vertex satisfies this condition.
\end{proof}

This allows us to prove the following lemma.
\begin{lem}
For any vertex $u$ of degree at most $9$ and any connected component
$C$ of $G\setminus N[u]$, there exists a connected component $C'$ of
$G\setminus N[u]$ such that $N(C')\setminus N(C) \neq \emptyset$.
\label{lem:numberk7bis}
\end {lem}
\begin{proof}
As $\deg(u)\ge 7$ (by Lemma~\ref{lem:mindeg7}) and $|N(C)|\le 5$ (by
Lemma~\ref{lem:compk7}), there are at least 2 vertices in
$N(u)\setminus N(C)$. By Lemma~\ref{lem:numberk7}, one of these 2
vertices has a neighbor $x$ out of $N[u]$.  Thus the component of
$G\setminus N[u]$ containing $x$ fulfills the requirements of the
lemma.
\end{proof}

Now choose a vertex $u$ of degree at most 9 and a connected component
$C$ of $G\setminus N[u]$, in such a way that $|C|$ is minimum.  By
Lemma~\ref{lem:existsk7}, $C$ has a vertex $v$ of degree at most 9.
Let $C_u$ be the connected component of $G\setminus N[v]$ that
contains $u$. By Lemma~\ref{lem:numberk7bis} there exists a connected
component $C'$ of $G\setminus N[v]$ such that $N(C')\setminus N(C_u)
\neq \emptyset$, and let $x\in N(C')\setminus N(C_u)$.  As
$N[u]\subset N[C_u]$, it is clear that $G[C'\cup\{x,v\}]$ is a
connected subgraph of $G\setminus N[u]$. We thus have that
$C'\subsetneq C$ and thus that $|C'|<|C|$, contradicting the choice of
$u$ and $C$.  This concludes the proof of case $r=7$ of
Theorem~\ref{th:krtri}

\section{Proof of Theorem~\ref{th:k8triweak}, \ref{th:k8-deg9} and \ref{th:k8tri} : the case of $6$ triangles}\label{sec:proofk8}

As in the previous sections, we will consider vertices of small degree
(and their neighborhoods) in $K_8$-minor free graphs.  We thus need the
following technical lemma that has been proven by computer~\cite{ag1}.

\begin{lem}
Every $K_7$-minors free graph $H$ distinct from $K_{2,2,2,2}$,
$K_{3,3,3}$ and $\overline{P_{10}}$ (the complement of the Petersen
graph), and such that $8\le |V(H)| \le 11$ and $\delta(H)\ge 6$,
verifies:
\begin{itemize}
\item $H$ is 5-connected.
\item $H$ has at most one vertex $v$ such that each of its
  incident edges belongs to 5 triangles.
\item For any subset $Y\subsetneq V(H)$ of size 7, the graph obtained
  from $H$ by adding two vertices $x$ and $y$ such that $N(x)=V(H)$
  and $N(y)=Y$, has a $K_8$-minor.
\end{itemize}
\label{lem:compk8}
\end{lem}
Note that the second property also holds for $K_{2,2,2,2}$,
$K_{3,3,3}$ and $\overline{P_{10}}$. Actually any edge of these 3
graphs belongs to less than 5 triangles.

By Theorem~\ref{th:jorg}, any $K_8$-minor free graph has minimum
degree at most 11.  Theorem~\ref{th:k8triweak} considers the case
where the minimum degree is exactly 11.  It will be used in
Section~\ref{sec:coloration} to color $K_8$-minor free graphs.

\begin{proofof}{Theorem~\ref{th:k8triweak}}
We prove this using the same technique as in
Section~\ref{sec:proofk7}.  Consider by contradiction a $K_8$-minor
free graph $G$ with $\delta(G)=11$, and such that every edge adjacent
to a degree 11 vertex belongs to at least $6$ triangles.  We start by
studying the properties of $G[N(u)]$, for any degree 11 vertex $u$.
First, it is clear that $G[N(u)]$ is $K_7$-minor free because
otherwise there would be a $K_8$-minor in $G$, contradicting the
hypothesis.

\begin{lem}
For any degree 11 vertex $u$, $\delta(G[N(u)])\ge 6$.
\label{lem:mindegk8}
\end{lem}
\begin{proof}
For any degree 11 vertex $u$ and any vertex of $v\in N(u)$,
the edge $uv$ belongs to at least 6 triangles. The third vertex of
each triangle clearly belongs to $N(u)$ and is adjacent to $v$.  Thus
$v$ has degree at least $6$ in $G[N(u)]$.
\end{proof}

\begin{lem}
For any degree 11 vertex $u$, any connected component $C$
of $G\setminus N[u]$ has a vertex $u'$ of degree at most 11 in $G$.
\label{lem:existsk8}
\end{lem}
\begin{proof}
Let $u$ be a degree 11 vertex of $G$ and let $C$ be any connected
component of $G\setminus N[u]$ which vertices have degree at least 12
in $G$. Lemma~\ref{lem:compk8} implies that $G[N(u)]$ is 5-connected
and that $|N(C)|=k \leq 6$.  Thus the lemma holds by considering the
graph $G[N[u]\cup C]$ in the following Lemma~\ref{lem:existsk8bis}.
\end{proof}

\begin{lem}
A graph $H$ with a degree 11 vertex $u\in V(H)$ and such that:
\begin{itemize}
\item[(A)] $H[N(u)]$ is 5-connected,
\item[(B)] $\delta(H[N(u)])\ge 6$,
\item[(C)] the set $C=V(H)\setminus N[u]$ is non-empty, and all its vertices have degree at least 12, and
\item[(D)] the set $N(C) \subseteq N(u)$ has size $k\le 6$,
\end{itemize}
has a $K_8$-minor.
\label{lem:existsk8bis}
\end{lem}
\begin{proof}
Consider a minimal counter-example $H$, that is a $K_8$-minor free
graph $H$ fulfilling conditions (A), (B) (C) and (D), and minimizing
$|V(H)|$.  Note that by definition every vertex of $N(C) \subseteq
N(u)$ has at least one neighbor in $C$. Let us prove that actually
every vertex of $N(C)$ has at least 2 neighbors in $C$.  If $x\in
N(C)$ has only one neighbor $y$ in $C$, contract the edge $xy$ and
denote $G'$ the obtained graph. It is clear that $G'$ is $K_8$-minor
free, and fulfills conditions (A), (B) and (D). Moreover, $C\setminus
\{y\}$ is non-empty as it contains at least 6 vertices of $N(y)\cap C$
(as $\deg(y)\ge 12$ and $|N(C)|=k\le 6$), and every vertex of
$C\setminus \{y\}$ has degree at least 12 in $H'$ as none of these
vertices are adjacent to $x$ in $H$. So $G'$ also fulfills condition
(C), and this contradicts the minimality of $G$. Thus every vertex of
$N(C)$ has at least 2 neighbors in $C$.

One can easily see that every $(K_{2,2,2,2,2}, 5)$-cockade has at
least 10 degree 8 vertices. Thus the graph $H[N[C]]$, and any graph
obtained from $H[N[C]]$ by adding edges, cannot be a
$(K_{2,2,2,2,2}, 5)$-cockade as it has at most $6$ vertices of
degree $8$. Thus as $H[N[C]]$ has at least $\frac{1}{2}(12|C| + 2k)$
edges and as this is at least $6(|C|+k)-20$ for $k \leq 4$, by
Theorem~\ref{th:jorg} we have that $5\le k\le 6$.

Now suppose that $k = 5,6$. Let $v_1$ and $v_2$ be two vertices of
smallest degree in $H[N(C)]$. Denote $\delta_1$ and $\delta_2$ their
respective degree in $H[N(C)]$. Note that if $k=6$ then $\delta_1\ge
1$ as $v_1$ has at least $6$ neighbors in $N(u)$ and as there are only
$5$ vertices in $N(u)\setminus N(C)$. By contracting the edge $uv_1$,
we have $k - 1 -\delta_1$ additionnal edges in $H[N[C]]$. Moreover
since $H[N(u)]$ is $5$-connected and since $|N(C)| \leq 6$, for every
vertex $x\neq v_2$ of $N(C)$ we have $|N(C)\setminus\{x,v_2\}|=4$ and
thus the graph $H[N(u)]\setminus (N(C)\setminus\{x,v_2\})$ is
connected.  Thus, iteratively contracting all the edges between $v_2$
and $N(u)\setminus N(C)$ we add at least $k - 2 - \delta_2$ edges in
$H[N[C]]$ (as we have potentially already added the edge $v_1v_2$ in
the previous step). The number of edges in the obtained graph is at
least
\[\frac{1}{2}[(\delta_1 +2) + (\delta_2
  +2)(k-1)) + 12|C|] + (k - 1 -\delta_1) + (k - 2 -\delta_2)\]
which is more than $6(|C|+k)-20$ (as $k\le 6$ and as if $k=6$ then
$\delta_1 \ge 1$). Thus this graph has a $K_8$-minor, and so does $H$.
This completes the proof of the lemma.
\end{proof}

\begin{lem}
For any degree 11 vertex $u$ and any connected component
$C$ of $G\setminus N[u]$, there exists a connected component $C'$ of
$G\setminus N[u]$ such that $N(C')\setminus N(C) \neq \emptyset$.
\label{lem:numberk8bis}
\end {lem}
\begin{proof}
As $\deg(u)=11$ and $|N(C)|\le 6$ (by Lemma~\ref{lem:compk8}), there
are at least 5 vertices in $N(u)\setminus N(C)$.  As $\delta(G)=11$
one can easily derive from Lemma~\ref{lem:compk8} that one (actually,
at least 4) of these vertices has a neighbor $x$ out of $N[u]$.  Thus
the component of $G\setminus N[u]$ containing $x$ fulfills the
requirements of the lemma.
\end{proof}

Now choose a degree 11 vertex $u$ and a connected component $C$ of
$G\setminus N[u]$, in such a way that $|C|$ is minimum.  By
Lemma~\ref{lem:existsk8}, $C$ has a degree 11 vertex $v$.  Let $C_u$
be the connected component of $G\setminus N[v]$ that contains $u$. By
Lemma~\ref{lem:numberk8bis} there exists a connected component $C'$ of
$G\setminus N[v]$ such that $N(C')\setminus N(C_u) \neq \emptyset$,
and let $x\in N(C')\setminus N(C_u)$.  As $N[u]\subset N[C_u]$, it is
clear that $G[C'\cup\{x,v\}]$ is a connected subgraph of $G\setminus
N[u]$. We thus have that $C'\subsetneq C$ and thus that $|C'|<|C|$,
contradicting the choice of $u$ and $C$.  This concludes the proof of
Theorem~\ref{th:k8triweak}
\end{proofof}

let us now prove Theorem~\ref{th:k8tri}.  Given a counter-exemple $G$
of Theorem~\ref{th:k8tri}, note that adding a vertex $s$ to $G$,
adjacent to a single vertex of $G$, one obtains a counter-exemple of
the following theorem, thus Theorem~\ref{th:k8tri} is a corollary of
the following theorem.

\begin{thm}
Consider a connected $K_8$-minor free graph $G$ with a vertex $s$ of
degree at most 7 and such that $N[s] \subsetneq V(G)$.  If every edge
$e \in E(G) \setminus E(G[N[s]])$ belongs to at least 6 triangles, then
$G$ contains an induced $K_{2,2,2,2,2}$.
\label{th:k8triwiths}
\end{thm}
Note that as $K_{2,2,2,2,2}$ is maximal $K_8$-minor free, any
$K_8$-minor free graph $G$ containing a copy of $K_{2,2,2,2,2}=G[X]$,
for some vertex set $X\subseteq V(G)$, is such that any connected
component $C$ of $G\setminus X$ verifies that $N(C)$ induces a clique
in $G[X]$.
\begin{proof}
Consider a connected $K_8$-minor free graph $G$ with a vertex $s$ of
degree at most 7 such that $N[s] \subsetneq V(G)$, such that $G$ does not
contain an induced $K_{2,2,2,2,2}$, and such that every
edge $e \in E(G) \setminus E(G[N[s]])$ belongs to at least 6
triangles.  Assume also that $G$ minimizes the number of vertices.
This property implies that $G\setminus N[s]$ is connected. Indeed,
otherwise one could delete one of the connected components in
$G\setminus N[s]$ and obtain a smaller counter-example.  The graph $G$
is almost 8-connected as observed in the following lemma.
\begin{lem}
For any separation $(A,B)$ of $G$ (denote $S = A \cap B$), we have
either:
\begin{itemize}
\item $|S| \geq 8$, or
\item $s \notin S$ and $A\setminus B = \{s\}$ (i.e. $B = V(G) \setminus
  \{s\}$), or
\item $s \in S$ and $|S| \geq 6$.
\end{itemize}
\label{lem:connectk8withs}
\end{lem}
\begin{proof}
Suppose there exists a separation $(A,B)$ contradicting the lemma.
Note that $|S| < 8$ and let us assume that $s\in A$.

Consider first the case where $s\notin S = A\cap B$, that is the case
where $\{s\} \subsetneq A\setminus B$.  Assume that among all such
counter-examples, $(A,B)$ minimizes $|S|$.  In this case, if the
connected component of $A\setminus B$ containing $s$ has more vertices
then, contracting this component into $s$, one obtains a proper minor
$G'$ of $G$ such that $N[s]\subsetneq V(G')$ (as $B\setminus A \neq
\emptyset$) and such that every edge not in $E(N[s])$ belongs to $6$
triangles.  This would contradict the minimality of $G$, and we thus
assume the existence of a component $C_0=\{s\}$ in $G\setminus B$.  As
$\{s\} \subsetneq A\setminus B$, let $C_1\neq \{s\}$ be some connected
component of $G\setminus B$. Let also $C_2$ be some component of
$G\setminus A$.  Note that for any of these components $C_i$,
$N(C)\subsetneq S$. Otherwise one could contract (if needed) all the
component into a single vertex $s'$ and the graph induced by
$\{s\}\cup N[C_1]$ or by $\{s\}\cup N[C_2]$ (a proper minor of $G$)
would be a smaller counter-example. Note now that $S = N(s)\cup
N(C_i)$ for $i=1$ or 2.  Indeed, otherwise the separation $(N[s]\cup
N[C_i], V(G)\setminus (C_i\cup \{s\}))$ would be a counter-example
contradicting the minimality of $S$. Finally note that
$N(C_2)\not\subseteq N(C_1)$, as otherwise contracting $C_1$ into a
single vertex $s'$ and considering the graph induced by
$C_2\cup\{s'\}$ one would obtain a smaller counter-example. Thus there
exists a vertex $x\in N(s)\cap N(C_2)$ such that $x\notin
N(C_1)$. Contracting the edge $xs$ and contracting the whole component
$C_2$ into $x$, and considering the graph induced by $C_1\cup \{x\}$
one obtains a smaller counter-example (where $x$ plays the role of
$s$).
 
Consider now the case where $s\in S=A\cap B$ and note that $|S|< 6$.
Assume that among all the separations containing $s$, $(A,B)$
minimizes $|S|$. Note that every connected component $C$ of $G
\setminus S$ is such that $s\in N(C)$. Indeed, we have seen above that
otherwise $C$ would be such that $|N(C)|\ge 8$ (by considering the
separation $(V(G)\setminus C,N[C])$ and noting that $\{s\}\subsetneq
V(G)\setminus N[C]$), and this would contradict the fact that $|S|<
6$. This implies that every connected component $C$ of $G \setminus S$
is such that $N(C)=S$. Otherwise $(V(G)\setminus C, N[C])$ would be a
separation containing $s$ contradicting the minimality of $S$. We can
assume without loss of generality that $s$ has at most as many
neighbors in $B\setminus A$ than in $A\setminus B$. In particular,
since $\deg(s)\le 7$, $s$ has at most 3 neighbors in $B\setminus
A$. Note that $B \not\subseteq N[s]$ as otherwise $G \setminus
(B\setminus A)$ would be a smaller counter-example. Thus there is an
edge in $G[B\setminus N[s]$ that belongs to at least 6 triangles, and
thus $|B|\ge 9$ ($s$ and the 6 triangles). Thus contracting every
component of $A \setminus B$ on $s$, results in a proper minor $G'$
of $G$ such that $\deg(s) \leq 7$ (at most 4 in $S$ and 3 in
$B\setminus A$), such that $N[s]\subsetneq V(G')$ (as
$|V(G')|=|B|\ge 9$), and such that every edge not in $E(N[s])$
belongs to $6$ triangles, contradicting the minimality of $G$. This
concludes the proof of the lemma.
\end{proof}

By Theorem~\ref{th:jorg} $G$ has at most $6n-20$ edges, and thus there
are several vertices in $G$ with degree at most 11.  Let us prove that
there are such vertices out of $N[s]$.

\begin{lem}
There are at least 2 vertices in $V(G) \setminus N[s]$ with degree at
most 11.
\label{lem:2small_ink8}
\end{lem}
\begin{proof}
Assume for contradiction that every vertex of $V(G) \setminus N[s]$
but one, say $x$, has degree at least 12, and recall that such vertex
has degree at least 8. Note that every vertex $v\in N(s)$ has a
neighbor in $V(G) \setminus N[s]$, as otherwise $G\setminus v$ would
be a smaller counter-exemple. Thus every vertex $v\in N(s)$ has an
incident edge that belongs to at least 6 triangles (without using the
edge $sv$), which implies that $\deg(v)\ge 8$. This implies that the
number of edges in $G$ verifies :
\[ 12n-42 \ge 2|E(G)| = \sum_{v\in V(G)} \deg(v) \ge 8 + k + 8k + 12(n-k-2) \] 
where $k=\deg(s)$. This implies that $3k\ge 26$ which contradicts
the fact that $k=\deg(s) \le 7$. This concludes the proof of the
lemma.
\end{proof}

As for any vertex $u\in V(G)\setminus N[s]$ each of its incident edges
belongs to 6 triangles, the graph $G[N(u)]$ has minimum degree at
least 6. As $G$ does not contain $K_8$ as subgraph, this also implies
that $\deg(u)\ge 8$.  So there are at least two vertices in
$V(G)\setminus N[s]$ with degree between 8 and 11.  The next lemma
tells us more on the neighborhood of these small degree vertices.
\begin{lem}
For every vertex $u \in V(G) \setminus N[s]$ with degree at most 11 in
$G$, $G[N(u)]$ is isomorphic to $K_{2,2,2,2}$, $K_{3,3,3}$ or
$\overline{P_{10}}$.
\label{lem:NminDeg}
\end{lem}
\begin{proof}
Let $u$ be any vertex of $V(G) \setminus N[s]$ with degree at most 11
in $G$.  As observed earlier $8\le \deg(u)\le 11$ and
$\delta(G[N(u)])\ge 6$.  Assume for contradiction that $N(u)$, is not
isomorphic to $K_{2,2,2,2}$, $K_{3,3,3}$ or $\overline{P_{10}}$. Note
that $|N(u)\cap N(s)| \le 6$, as otherwise Lemma~\ref{lem:compk8}
would contradict the $K_8$-minor freeness of $G$.

By Lemma~\ref{lem:compk8} one of the (at least two) vertices in
$N(u)\setminus N(s)$, say $x$, has an incident edge in $G[N(u)]$ that
belongs to at most 5 triangles in $G[N[u]]$.  Thus the sixth triangle
containing this edge goes through a vertex $v$ of $V(G) \setminus
(N[u] \cup \{s\})$.

Lemma~\ref{lem:connectk8withs} implies that the connected component
$C$ of $v$ in $V(G) \setminus N[u]$ is such that $N(C)\ge 8$. The
graph obtained by contracting $C$ into a single vertex has a
$K_8$-minor (by Lemma~\ref{lem:compk8}), a contradiction.
\end{proof}

A $K_3$-minor rooted at $\{a, b, c\}$, or a $\{a, b, c\}$-minor, is a
$K_3$-minor in which you can contract edges incident to $a$, $b$ or
$c$, to obtain a $K_3$ with vertex set $\{a, b, c\}$.  For the rest of
the proof we need the following characterization of rooted $K_3$-minor.

\begin{thm}[D. R. Wood and S. Linusson, Lemma 5 of~\cite{wl1}]
For distinct vertices a, b, c in a graph G, either:
\begin{itemize}
\item $G$ contains an $\{a, b, c\}$-minor, or
\item for some vertex $v \in V(G)$ at most one of $a, b, c$ are in each component of $G \setminus v$.
\end{itemize}
\label{th:rootedtri}
\end{thm}

\begin{lem}
For every vertex $u \in V(G) \setminus N[s]$ with degree at most 11 in
$G$, the graph $G[N(u)]$ is not isomorphic to $K_{3,3,3}$.
\label{lem:noK333}
\end{lem}
\begin{proof}
Observe that adding two vertex disjoint edges or three edges of a
triangle in $K_{3,3,3}$ yields a $K_7$-minor.  Now assume for
contradiction that there exists some vertex $u\in V(G) \setminus N[s]$
such that $G[N(u)]$ is isomorphic to $K_{3,3,3}$.

As the set $N(u) \setminus N[s]$ is non-empty (it has size at least
$9-7$) and as every vertex $v$ in $N(u) \setminus N[s]$ has degree at
least $8$, and thus has a neighbor out of $N[u]$, $G \setminus N[u]$
has a connected component $C\neq \{s\}$.  By
Lemma~\ref{lem:connectk8withs} $|N(C)| \geq 8$.

If $G \setminus N[u]$ has another connected component $C'$ such that
$|N(C')| \geq 6$, one can create two vertex disjoint edges in
$K_{3,3,3}$ by contracting two vertex disjoint paths with non-adjacent
ends in $N(u)$, one living in each component. This would contradict
the $K_8$-minor freeness of $G$. Thus if there is a component $C'$,
we should have $C'=\{s\}$ and $\deg(s)\le 5$, as by
Lemma~\ref{lem:connectk8withs} a component $C'\neq \{s\}$ would be
such that $|N(C')| \geq 8$. In the following we consider the graph
$G' = G[N[u] \cup C]$ (which is $G$ or $G \setminus s$).

Let $\{a_1,a_2,a_3\}$, $\{b_1,b_2,b_3\}$ and $\{c_1,c_2,c_3\}$ be the
three disjoint stables of $N(u) = K_{3,3,3}$. Without loss of
generality we can assume that $\{a_1,a_2,a_3\}\subset N(C)$, and that
$a_1\notin N(s)$. As the edges of $N(u)$ incident to $a_1$ belong to
at least 6 triangles, $a_1$ has at least two neighbors in $G'
\setminus N[u]$. By Theorem~\ref{th:rootedtri} (applied to
$\{a_1,a_2,a_3\}$ in the graph $G''= G'\setminus
\{u,b_1,b_2,b_3,c_1,c_2,c_3\}$), there is a vertex $v \in V(G'')$ such
that at most one of $a_1, a_2, a_3$ are in each component of $G''
\setminus v$. Note that since $a_1$, $a_2$ and $a_3\in N(C)$, all the
sets $C\cup\{a_i,a_j\}$ induce a connected graph, and thus $v\neq
a_1$, $a_2$ or $a_3$. Equivalently we have that $v\in V(G') \setminus
N[u]$.  Hence $G'' \setminus \{v\}$ contains at least $3$ components
$C_1$, $C_2$ and $C_3$ with $a_i \in C_i$, for $1 \leq i \leq
3$. Since $a_1$ has at least two neighbors in $G' \setminus N[u]$, one
of them is distinct from $v$ and we can define $C'_1$ as a connected
component of $C_1\setminus \{a_1\}$. Note that by construction
$N(C'_1) \subset N(u)\cup\{v\}$.  Since $C'_1\neq\{s\}$ (as $a_1\notin
N(s)$) and as we might have $v=s$, Lemma~\ref{lem:connectk8withs}
implies that $N(C'_1)\ge 6$ (including $v$ and $a_1$). So $C'_1$ has
at least 4 neighbors in $\{b_1,b_2,b_3,c_1,c_2,c_3\}$ and there is a
path with interior vertices in $C'_1$ between two vertices $b_i$ and
$b_j$, or between two vertices $c_i$ and $c_j$. Furthermore, there is
a path with interior vertices in $C_2 \cup \{v\} \cup C_3$ between the
vertices $a_2$ and $a_3$. This contradicts the $K_8$-minor freeness of
$G$, and thus concludes the proof of the lemma.
\end{proof}

\begin{lem}
For every vertex $u \in V(G) \setminus N[s]$ with degree at most 11 in
$G$, the graph $G[N(u)]$ is not isomorphic to $K_{2,2,2,2}$.
\label{lem:noK2222}
\end{lem}
\begin{proof}
Assume for contradiction that there exists some vertex $u\in V(G)
\setminus N[s]$ such that $G[N(u)]$ is isomorphic to
$K_{2,2,2,2}$. One can check that adding two edges in $K_{2,2,2,2}$
creates a $K_7$-minor. Thus as $G$ is $K_8$-minor free it should not
be possible to add (by edge contractions) two new edges in $N(u)$.

\begin{claim}
A vertex $v\in V(G) \setminus N[u]$ has at most six neighbors in
$N(u)$.
\label{claim:k8-v6neighbors}
\end{claim}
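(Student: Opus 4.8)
The plan is to assume for contradiction that some $v\in V(G)\setminus N[u]$ has at least $7$ neighbours in $N(u)$ and to derive either a $K_8$-minor of $G$ or an induced $K_{2,2,2,2,2}$, both of which are forbidden. Since $G[N(u)]\cong K_{2,2,2,2}$ we have $\deg(u)=|N(u)|=8$, so $v$ is non-adjacent to at most one vertex of $N(u)$; write the four parts of $K_{2,2,2,2}$ as $\{p_i,p_i'\}$ for $i=1,\dots,4$. I will use two standing facts. First, since $K_{2,2,2,2,2}$ is maximal $K_8$-minor free, $K_{2,2,2,2,2}$ together with any one extra edge has a $K_8$-minor. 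Second, since $u$ is adjacent to \emph{every} vertex of $N(u)$, any clique-minor whose branch sets all meet $N(u)$ is joined to $u$; in particular a $K_7$-minor with all branch sets inside $N(u)$ (or a $K_8$-minor with all branch sets inside $N[u]\cup X$, $X$ disjoint from $u$, each meeting $N(u)$) already gives a $K_8$-minor.

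First I treat the case where $v$ is adjacent to all $8$ vertices of $N(u)$. If $uv\in E(G)$, then $G[N[u]\cup\{v\}]$ is $K_{2,2,2,2,2}$ (with fifth part $\{u,v\}$) plus the edge $uv$, so it has a $K_8$-minor. If $uv\notin E(G)$, then $G[N[u]\cup\{v\}]$ is exactly an induced $K_{2,2,2,2,2}$, contradicting the standing assumption that $G$ has no induced $K_{2,2,2,2,2}$.

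Now assume $v$ misses exactly one vertex, say $p_1$, so $v$ is adjacent to $p_1',p_2,p_2',p_3,p_3',p_4,p_4'$. If $uv\in E(G)$, I exhibit a $K_8$-minor directly inside $G[N[u]\cup\{v\}]$ with the eight branch sets $\{u\}$, $\{v\}$, $\{p_1,p_2\}$, $\{p_3',p_4'\}$, $\{p_1'\}$, $\{p_2'\}$, $\{p_3\}$, $\{p_4\}$; a short check (e.g. $p_2\sim v$ joins $\{p_1,p_2\}$ to $v$, and $p_3'\sim p_4$, $p_4'\sim p_3$ join $\{p_3',p_4'\}$ to $p_3$ and $p_4$) shows every two branch sets are joined by an edge. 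The remaining, and harder, case is $uv\notin E(G)$ with $v$ missing exactly $p_1$: here $G[N[u]\cup\{v\}]$ alone has clique number only $5$, so a short counting argument (eight branch sets, of which at most $5$ are singletons, would need at least $2\cdot 3+5>10$ vertices) shows it has no $K_8$-minor, and one must bring in vertices outside $N[u]\cup\{v\}$. The plan is to use the $6$-triangle hypothesis: since $u\notin N[s]$ gives $s\notin N(u)$ and hence $N(u)\cap N[s]\subseteq N(s)$, we cannot have $N(u)\subseteq N[s]$ (that would give $8=|N(u)|\le\deg(s)\le 7$), so fix $w\in N(u)\setminus N[s]$; every edge incident to $w$ lies in at least $6$ triangles, while the relevant such edges (those of $K_{2,2,2,2}$ at $w$ hitting the part $\{p_1,p_1'\}$, and $wv$ if it exists) have at most $5$ common neighbours inside $N[u]\cup\{v\}$, so each forces a common neighbour $z\notin N[u]\cup\{v\}$. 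Combining such a $z$ with Lemma~\ref{lem:connectk8withs} — which forces the component of $G\setminus N[u]$ containing $z$ to have a large neighbourhood — one completes the almost-complete structure on $N[u]\cup\{v\}$ into a $K_8$-minor (keeping every branch set meeting $N(u)$, so $u$ joins it) or into an induced $K_{2,2,2,2,2}$.

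The main obstacle is precisely this last case: forcing the $6$-triangle condition to produce an \emph{exterior} vertex of a usable adjacency pattern and then routing it into the rigid $K_{2,2,2,2}$ without using $u$ twice and without breaking the property that every branch set meets $N(u)$, which is what lets $u$ promote a $K_7$-minor to the desired $K_8$-minor.
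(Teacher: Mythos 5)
Your proposal has the right general shape (use the $6$-triangle hypothesis to force a vertex outside $N[u]\cup\{v\}$ and then invoke Lemma~\ref{lem:connectk8withs}), but it has several gaps and one genuine error, and you yourself flag the last step as incomplete.

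First, the subcases ``$uv\in E(G)$'' are vacuous: the claim is about $v\in V(G)\setminus N[u]$, and $N[u]$ is the \emph{closed} neighbourhood, so $uv\notin E(G)$ automatically. Dropping those cases leaves exactly two: $v$ sees all of $N(u)$ (then $G[N[u]\cup\{v\}]$ is an induced $K_{2,2,2,2,2}$, contradiction), and $v$ misses exactly one vertex $x$ of $N(u)$, which is the whole content of the claim.

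Second, and more seriously, your mechanism for forcing an exterior apex is fragile. You fix $w\in N(u)\setminus N[s]$ and consider the edges from $w$ into the part $\{p_1,p_1'\}$. If $w\in\{p_1,p_1'\}$ there are no such edges; and if, say, $N(u)\setminus N[s]=\{p_1'\}$, every edge $p_1'y$ with $y\neq p_1$ already has $v$ as a sixth common neighbour inside $N[u]\cup\{v\}$ (since $v\sim p_1'$ and $v\sim y$), so no exterior apex is forced. This configuration is not ruled out by your setup. The paper avoids it by a prior reduction: observe that contracting an edge incident to $v$ can realize three of the four non-edges of the $K_{2,2,2,2}$, so a single path with non-adjacent ends in $N(u)$ and inner vertices outside $N[u]\cup\{v\}$ already yields a $K_8$-minor. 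One then checks that such a path exists whenever $s\neq v$ and $s$ has $\ge 5$ neighbours in $N(u)$ (pigeonhole on the four parts). In the remaining situation ($s=v$, or $s$ has $\le 4$ neighbours in $N(u)$) one can always pick an edge $xy$ \emph{incident to $x=p_1$} that is not in $E(G[N[s]])$; its sixth triangle apex $w$ is then automatically distinct from $v$ (since $v\not\sim x$) and from $s$, so $C\ni w$ with $C\neq\{s\}$ and Lemma~\ref{lem:connectk8withs} gives $|N(C)|\ge 6$, hence $\ge 5$ neighbours of $C$ in $N(u)$, hence a path through $C$ with non-adjacent ends.

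Third, your finish (``one completes the almost-complete structure into a $K_8$-minor or into an induced $K_{2,2,2,2,2}$'') is not a proof: the clean way to convert the exterior path into a $K_8$-minor is exactly the reduction omitted above (one realized non-edge from the path, a second from contracting a $v$-edge, giving a $K_7$-minor on $N(u)$ which $u$ promotes to $K_8$). Without stating that, the argument does not close.
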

\begin{proof}
If there was a vertex $v$ with 8 neighbors in $N(u)$, $N[u]\cup \{v\}$
would induce a $K_{2,2,2,2,2}$, a contradiction to the definition of
$G$. We thus assume for contradiction that there is a vertex $v$ with
exactly 7 neighbors in $N(u)$. Note that eventually $v=s$.  Let us
denote $x$ the only vertex in $N(u)\setminus N(v)$. Note that among
the 4 non-edges of $G[N(u)]$, only one cannot be created by
contracting an edge incident to $v$. So if there is a path whose ends
are non-adjacent in $N(u)$ and whose inner vertices belong to
$V(G)\setminus (N[u]\cup \{v\})$, then we have a $K_8$-minor, a
contradiction. There is clearly such path if $s\neq v$ and if $s$ has
5 neighbors in $N(u)$, we thus have that either $s=v$ or $s$ has at
most 4 neighbors in $N(u)$. Both cases imply that some edge $xy$
(incident to $x$) does not belong to $G[N[s]]$, and thus $xy$ belongs
to at least 6 triangles. As $xy$ belongs to only 5 triangles in
$G[N[u]]$, this implies the existence of a vertex $w\in V(G) \setminus
N[u]$ adjacent to $x$ such that $w\neq s, v$. Let $C$ be the connected
component of $w$ in $G\setminus (N[u]\cup \{v\})$. As $C\neq \{s\}$,
Lemma~\ref{lem:connectk8withs} implies that $N(C)$ has size at least
6. Thus $C$ has at least 5 neighbors in $N(u)$ and one can link two
non-adjacent vertices of $N(u)$ by a path going through $C$, a
contradiction.
\end{proof}

By Lemma~\ref{lem:2small_ink8} there exists another vertex $u'\in V(G)
\setminus N[s]$ such that $\deg(u')\le 11$. By Lemma~\ref{lem:NminDeg}
and Lemma~\ref{lem:noK333}, $G[N(u')]$ is isomorphic to $K_{2,2,2,2}$
or $\overline{P_{10}}$.

\begin{claim}
The vertices $u$ and $u'$ are non-adjacent.
\end{claim}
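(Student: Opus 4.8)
The plan is to argue by contradiction. Suppose $uu'\in E(G)$, so that $u'\in N(u)$. Since $G[N(u)]\cong K_{2,2,2,2}$, the vertex $u'$ has a unique non-neighbour $x$ in $N(u)$ (its partner in $K_{2,2,2,2}$), and is adjacent to $u$ and to the six vertices of $R:=N(u)\setminus\{u',x\}$. Because $x\notin N(u')$ and $u'\notin N(u')$, this gives $N(u')\cap N[u]=\{u\}\cup R$, a set of exactly $7$ vertices; in particular $u$, viewed as a vertex of the graph $G[N(u')]$, is adjacent there precisely to $R$. Recall that, by Lemmas~\ref{lem:NminDeg} and~\ref{lem:noK333}, $G[N(u')]$ is isomorphic to $K_{2,2,2,2}$ or to $\overline{P_{10}}$, and I would treat these two cases separately.

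If $G[N(u')]\cong K_{2,2,2,2}$ then $|N(u')|=8$, so $N(u')\setminus N[u]$ is a single vertex $w$, and $w\neq s$ since $u'\notin N[s]$. Inside $G[N(u')]\cong K_{2,2,2,2}$ the vertex $u$ is adjacent to all six vertices of $R$, so the remaining vertex $w$ must be the partner of $u$, hence adjacent to every vertex of $R$. Together with the edge $wu'$ this gives $w$ at least $7$ neighbours in $N(u)$ while $w\in V(G)\setminus N[u]$, contradicting Claim~\ref{claim:k8-v6neighbors}.

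If $G[N(u')]\cong\overline{P_{10}}$ then $\deg(u')=10$, and as $\overline{P_{10}}$ is $6$-regular the $6$-element set $R$, consisting of neighbours of $u$ in $G[N(u')]$, must be exactly the neighbourhood of $u$ inside $G[N(u')]$; moreover $G[R]$ is $G[N(u)]\cong K_{2,2,2,2}$ with the part $\{x,u'\}$ deleted, i.e.\ $K_{2,2,2}$. Hence the conclusion would follow from the fact that the neighbourhood of a vertex of $\overline{P_{10}}$ never induces $K_{2,2,2}$: for a vertex $v$ of the Petersen graph $P_{10}$ its three neighbours form an independent set, so its six non-neighbours span $15-3-3\cdot 2=6$ edges of $P_{10}$ (three edges at $v$, and $3\cdot 2$ edges leaving the independent set $N_{P_{10}}(v)$), hence $\binom{6}{2}-6=9$ edges of $\overline{P_{10}}$, whereas $K_{2,2,2}$ has $12$ edges — a contradiction.

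The whole argument is essentially bookkeeping around the ``partner'' structure of $K_{2,2,2,2}$ together with Claim~\ref{claim:k8-v6neighbors}; the only point requiring a genuine (if mild) computation is the last one, namely that no vertex of $\overline{P_{10}}$ has a neighbourhood inducing $K_{2,2,2}$, which I expect to be the crux.
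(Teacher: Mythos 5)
Your proof is correct and essentially identical to the paper's: you split on whether $G[N(u')]\cong K_{2,2,2,2}$ or $\overline{P_{10}}$, use Claim~\ref{claim:k8-v6neighbors} to kill the first case, and compare $G[N(u)\cap N(u')]$ (which must be $K_{2,2,2}$ from the $N(u)$ side) against the neighborhood structure inside $\overline{P_{10}}$ in the second case. The only cosmetic difference is that the paper recognizes that neighborhood as $\overline{C_6}$ and declares the mismatch directly, while you count edges ($9$ vs.\ $12$); both amount to the same observation.
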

\begin{proof}
We assume for contradiction that $u$ and $u'$ are adjacent and we
first consider the case where $G[N(u')]$ is isomorphic to
$K_{2,2,2,2}$. In this case, as $u'$ has allready 7 neighbors in
$N[u]$, $u'$ has a exactly one neighbor $v$ in $G\setminus N[u]$.  As
$v$ has 7 neighbors in $N(u')$, we have that $|N(u)\cap N(v)|\ge 7$, a
contradiction to Claim~\ref{claim:k8-v6neighbors}.

If $G[N(u')]$ is isomorphic to $\overline{P_{10}}$, this implies that
$G[N(u)\cap N(u')]$ is isomorphic to $\overline{C_6}$ (the complement
of the 6-cycle). This is not compatible with $G[N(u)]$ being
isomorphic to $K_{2,2,2,2}$, as this in turn implies that $G[N(u)\cap
N(v)]$ is isomorphic to $K_{2,2,2}$.
\end{proof}

As by Lemma~\ref{lem:connectk8withs} there is no $(\le 5)$-separator
$(A,B)$ with $u\in A\setminus B$ and $u'\in B\setminus A$, Menger's
Theorem implies the existence of 6 vertex disjoint paths between $u$
and $u'$.  These paths induces $6$ disjoint paths $P_1 \ldots P_6$
between $N(u)$ and $N(u')$. Note that every vertex in $N(u) \cap
N(u')$ can be seen as a path of length $0$.

Therefore, since $N(u)$ is isomorphic to $K_{2,2,2,2}$, there are two
non-edges $a_1a_2$ and $a_3a_4$ of $G[N(u)]$ such that each $a_i$ is
the end of the path $P_i$. We denote by $b_i$, $1 \leq i \leq 4$ the
end in $N(u')$ of the path $P_i$.  Note that if $a_i \in N(u) \cap
N(u')$ then $a_i = b_i$.  Moreover we can suppose that the choice of
$a_1a_2$ and $a_3a_4$ maximizes the size of $\{a_1,a_2,a_3,a_4\} \cap
N(u')$. Since $N(u)$ is isomorphic to $K_{2,2,2,2}$ and since $|N(u)
\cap N(u')| \leq 6$ (by Claim~\ref{claim:k8-v6neighbors}), there are
at most two vertices in $N(u) \cap N(u')$ distinct from $a_1,a_2,a_3$,
and $a_4$. Let $X= (N(u) \cap N(u')) \setminus \{a_1,a_2,a_3,a_4\}$.

Since both $K_{2,2,2,2}$ and $\overline{P_{10}}$ are $6$-connected
then $N[u']$ is $7$-connected and so $G[N[u'] \setminus X]$ is
$5$-connected.  Moreover $G[N[u'] \setminus X]$ has too many edges to
be planar. Indeed, it has $9-|X|$ vertices and at least $32-7|X|$
edges, which is more than $3(9-|X|)-6$ for $0 \le |X| \le 2$. We now
need the following theorem of Robertson and Seymour about vertex
disjoint pairs of paths.

\begin{thm}[Robertson and Seymour~\cite{rs1}]
\label{th:disjointpath}
Let $v_1 , \ldots, v_k$ be distinct vertices of a graph $H$. Then either
\begin{itemize}
\item[(i)] there are disjoint paths of $H$ with ends $p_1$ $p_2$ and
  $q_1$ $q_2$ respectively, so that $p_1$, $q_1$, $p_2$, $q_2$ occur
  in the sequence $v_1, \ldots, v_k$ in order, or
\item[(ii)] there is a $(\le 3)$-separation $(A,B)$ of $H$ with $v_1,
  \ldots, v_k \in A$ and $|B \setminus A| \geq 2$, or
\item[(iii)] $H$ can be drawn in a disc with $v_1 , \ldots, v_k$ on
  the boundary in order.
\end{itemize}
\end{thm}

Applying this theorem to the graph $G[N[u'] \setminus X]$ with
$(v_1,\ldots v_k) = (b_1,b_3,b_2,b_4)$ one obtains that there are two
vertex disjoint paths in $N[u'] \setminus X$, a path $P_{1,2}$ between
$b_1$ and $b_2$, and a path $P_{3,4}$ between $b_3$ and $b_4$.  Theses
paths are disjoint from $N[u]$ by construction, except possibly at
their ends.  Finally, since the paths $P_i$, for $1 \leq i \leq 4$,
constructed above are disjoint from $N[u]$ and from $N[u'] \setminus
X$, except at their ends, there exists two disjoint paths respectively
linking $a_1$ with $a_2$ (through $P_1$, $P_{1,2}$ and $P_2$), and
$a_3$ with $a_4$ (through $P_3$, $P_{3,4}$ and $P_4$). This
contradicts the $K_8$-minor freeness of $G$ and thus concludes the
proof of the lemma.
\end{proof}

By Lemma~\ref{lem:2small_ink8} there exists at least two vertices
$u$ and $u'\in V(G) \setminus N[s]$ with degree at most $11$. By
Lemma~\ref{lem:NminDeg}, Lemma~\ref{lem:noK333}, and
Lemma~\ref{lem:noK2222}, both $G[N(u)]$ and $G[N(u')]$ are isomorphic
to $\overline{P_{10}}$. The two graphs induced by $N[u]$ and $N[u']$
are close to a $K_8$-minor as observed in the following claim.
\begin{claim}
\label{cm:2edgeP10}
In $\overline{P_{10}}$, adding two edges $ab$ $cd$, such that $ab$,
$bc$ and $cd \notin E(\overline{P_{10}})$, creates a $K_7$-minor.
Furthermore adding three edges $e_1$ $e_2$ and $e_3$, such that $e_1
\cap e_2 \cap e_3 =\emptyset$ in $\overline{P_{10}}$, creates a
$K_7$-minor.
\end{claim}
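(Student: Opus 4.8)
The plan is to exploit the identification of $\overline{P_{10}}$ with the triangular graph $T(5)=L(K_5)$: take its vertices to be the $2$-element subsets of $[5]=\{1,\dots,5\}$ -- write $ij$ for $\{i,j\}$ -- with $ij$ and $kl$ adjacent iff they intersect, so that the non-edges of $\overline{P_{10}}$ are exactly the pairs of \emph{disjoint} $2$-subsets and $\mathrm{Aut}(\overline{P_{10}})=S_5$ acts on everything. Under this identification the non-edges of $\overline{P_{10}}$ are precisely the edges of the Petersen graph $P_{10}$. Hence the hypothesis of the first assertion says that $ab$, $bc$, $cd$ are edges of $P_{10}$, i.e.\ $a,b,c,d$ trace a walk of length $3$ in $P_{10}$; since $P_{10}$ has girth $5$, either $a,b,c,d$ are four distinct vertices inducing a path $P_4$ in $P_{10}$, or (when $a=c$ or $b=d$) the walk collapses to a path $P_3$ in $P_{10}$. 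As $P_{10}$ is $3$-arc-transitive -- in fact $S_5$ acts regularly on its $120$ $3$-arcs -- all $P_4$'s form one $S_5$-orbit and all $P_3$'s another, so it suffices to exhibit a $K_7$-minor for one representative of each.

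For the $P_4$ case, take $a,b,c,d=12,34,15,23$, so that we add the non-edges $\{12,34\}$ and $\{15,23\}$. Then the seven branch sets
\[\{12\},\quad \{13\},\quad \{23\},\quad \{34\},\quad \{14,24\},\quad \{15,45\},\quad \{25,35\}\]
(the last three being edges of $\overline{P_{10}}$) form a $K_7$-minor: all $\binom{7}{2}$ pairs are joined, the pair $\{12\}$--$\{34\}$ via the first added non-edge and the pair $\{23\}$--$\{15,45\}$ via the second, and all the others via edges already present in $\overline{P_{10}}$. For the degenerate $P_3$ case one adds, say, $\{12,34\}$ and $\{12,35\}$, uses the branch set $\{12,34\}$ (connected by the first added non-edge), and checks that $\overline{P_{10}}\setminus\{12,34\}$ already contains a $K_5$-minor -- for instance on $\{14,24\}$, $\{13,15\}$, $\{23,35\}$, $\{25\}$, $\{45\}$ -- to every part of which $\{12,34\}$ is joined.

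For the second assertion, note first that three edges of $P_{10}$ that pairwise intersect but have no common vertex would form a triangle, which is impossible in $P_{10}$; so among $e_1,e_2,e_3$ some two, say $e_1$ and $e_2$, are vertex-disjoint. If some endpoint of $e_1$ is $P_{10}$-adjacent to some endpoint of $e_2$, then $e_1$, that connecting edge and $e_2$ form a $P_4$ in $P_{10}$, so adding just $e_1$ and $e_2$ already falls under the first assertion. Using the strong regularity of $P_{10}$ to see that the two vertices of $P_{10}$ outside the closed neighbourhood of a $P_3$ are themselves non-adjacent, the same idea shows that when $e_1,e_2,e_3$ form a $P_3+K_2$ (or a $3K_2$ that is not induced in $P_{10}$) there is again such a connecting edge, so these cases reduce too. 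The only remaining case is that $e_1,e_2,e_3$ induce a $3K_2$ in $P_{10}$; one checks that this forces $\{e_1,e_2,e_3\}$ to be the three perfect matchings $\{12,34\}$, $\{13,24\}$, $\{14,23\}$ of a $4$-subset of $[5]$ (a single $S_5$-orbit), and then adding them turns $\overline{P_{10}}[\{12,13,14,23,24,34\}]$ into a $K_6$ to which the branch set $\{15,25,35\}$ is completely joined, producing the $K_7$-minor.

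The only genuinely routine part is checking that each displayed family of branch sets is pairwise joined; the real work is organisational -- verifying that the reductions above leave exactly the handful of $S_5$-orbits listed, and picking for each a branch decomposition that uses the added edges economically -- which is precisely the kind of finite bookkeeping the authors carry out by computer elsewhere in the paper.
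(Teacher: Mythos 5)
Your $L(K_5)$ recasting is clean, and the $P_4$ branch decomposition $\{12\},\{13\},\{23\},\{34\},\{14,24\},\{15,45\},\{25,35\}$ for the first assertion does check out (I verified all $\binom{7}{2}$ adjacencies). The reduction scheme for three edges -- triangle impossible by girth, $P_4$ or $P_3+K_2$ or non-induced $3K_2$ reduce to the first assertion via a connecting $P_{10}$-edge, induced $3K_2$ handled directly on the $K_4$-subset with branch set $\{15,25,35\}$ -- is also correct and exhaustive (the only configurations compatible with $e_1\cap e_2\cap e_3=\emptyset$ other than a forbidden triangle are indeed $P_4$, $P_3+K_2$, and $3K_2$). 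This is essentially the same strategy as the paper (reduce by symmetry to one representative and exhibit an explicit branch decomposition), carried out in the triangular-graph model rather than with the paper's figure labels; it does nothing different at a structural level but is arguably easier to check.

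However, there is a genuine error in the paragraph on the ``degenerate $P_3$ case.'' You propose the branch sets $\{12,34\},\{14,24\},\{13,15\},\{23,35\},\{25\},\{45\}$: that is only six branch sets on all ten vertices, so it is a $K_6$-minor, not a $K_7$-minor, and there is no room for a seventh part. Worse, the case itself cannot be fixed: under the $L(K_5)$ dictionary, $a=c$ (or $b=d$) means you are adding two non-edges at a single vertex of $\overline{P_{10}}$, say $12\text{--}34$ and $12\text{--}35$, and that graph has no $K_7$-minor (one can check, e.g., by noting that $45$ has degree $6$ with $G[N(45)]$ the prism, whose largest clique has size $3$, and a short case analysis kills every seven-way partition). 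The claim as the paper intends it -- and as its own proof (``any such pair $ab$, $cd$ yields the same graph'') and all three of its applications in Section~4 make clear -- has $a,b,c,d$ four distinct vertices, i.e.\ the two added edges are vertex-disjoint. So the $P_3$ sub-case should simply be deleted; including it both adds a false sub-claim and supplies a decomposition that is off by one.
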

\begin{proof}
One can easily check the accuracy of the first statement, by noting
that adding any such pair of edges $ab$ and $cd$, yields the same
graph, and by noting that adding the edges $u_1u_2$ and $u_3u_4$ in
$\overline{P_{10}}$ (notations come from Figure~\ref{fig:P10}) the
partition $\{\{0,2\},\{1\},\{3\},\{4\},\{5\}, \{6,7\}, \{8,9\}\}$
induces a $K_7$-minor.

For the second statement, we can assume that the three added edges are
such that they pairwise do not correspond to the first statement.
Without loss of generality, assume that one of the three edges is
$u_0u_5$, and note that the other added edges are distinct from
$u_1u_2$, $u_1u_6$, $u_3u_4$, $u_4u_9$, $u_2u_7$, $u_7u_9$, $u_3u_8$
and $u_6u_8$. Consider the case where one of the other added edges is
incident to $u_0u_5$. By symmetry one can assume that this edge is
$u_0u_1$, but this implies that the third added edge is distinct from
$u_0u_4$ (as the three edges would intersect), and from $u_3u_4$,
$u_6u_9$, $u_5u_7$ and $u_5u_8$ (by the first statement). There is
thus no remaining candidate for the third edge.
This implies that it is sufficient to consider the case where the
edges $u_0u_5$, $u_2u_3$ and $u_6u_9$ are added in
$\overline{P_{10}}$. In this case the partition $\{\{1\},\{ 4\},\{
7\},\{ 8\},\{ 0,5\},\{ 6,9\},\{ 2,3\} \}$ induces a $K_7$-minor.
\end{proof}
\begin{figure}[h]
\centering
\includegraphics{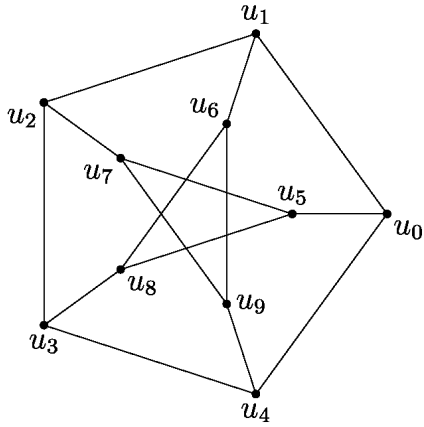}
\caption{The Petersen graph $P_{10}$.}
\label{fig:P10}
\end{figure}

Let us list the induced subgraphs of $\overline{P_{10}}$ of size 6.
\begin{claim}
\label{cm:k8-P10-6subgraphs}
There are exactly 6 distinct induced subgraphs of size 6 in
$\overline{P_{10}}$, including $K_{2,2,2}$. The complements of these
graphs are represented in Figure~\ref{fig:P10-6subgraphs}.
Furthermore note that every induced subgraphs of $\overline{P_{10}}$
of size at least 7, has a subgraph of size 6 distinct from
$K_{2,2,2}$.
\end{claim}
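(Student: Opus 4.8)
The plan is to move the problem into the Petersen graph $P_{10}$ itself. For a vertex set $S\subseteq V(P_{10})$, the graph $\overline{P_{10}}[S]$ is exactly the complement of $P_{10}[S]$, so classifying the induced subgraphs of size $6$ of $\overline{P_{10}}$ amounts to classifying the graphs $P_{10}[S]$ over all $6$-subsets $S$ and then complementing. Write $T=V(P_{10})\setminus S$ for the complementary $4$-set. Since $P_{10}$ is $3$-regular with $15$ edges, counting edge-endpoints in $T$ gives $e(T,S)=12-2e(T)$, hence $e(P_{10}[S])=15-e(T)-e(T,S)=3+e(T)$, so $\overline{P_{10}}[S]$ has $12-e(T)$ edges. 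As $P_{10}$ has girth $5$, a $4$-set spans at most $3$ edges and $P_{10}[T]$ is one of $4K_1$, $K_2+2K_1$, $2K_2$, $P_3+K_1$, $P_4$, $K_{1,3}$; accordingly $\overline{P_{10}}[S]$ has $9$, $10$, $11$ or $12$ edges.

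First I would dispose of the case $e(T)=0$: then $T$ is a maximum independent set of $P_{10}$. Viewing $P_{10}$ as the Kneser graph on the $2$-subsets of $\{1,\dots,5\}$ (adjacency being disjointness), the maximum independent sets are the five ``stars'', and $S$ is then the set of all $2$-subsets of a fixed $4$-element set; each such $2$-subset is disjoint from exactly one other, so $P_{10}[S]=3K_2$ and $\overline{P_{10}}[S]=K_{2,2,2}$, the octahedron. For the remaining cases $e(T)\in\{1,2,3\}$ I would run through the $\mathrm{Aut}(P_{10})$-orbits of $4$-subsets — a short finite check, which can also be performed by computer as in~\cite{ag1} — and verify that they produce altogether five further isomorphism types, yielding the six graphs of Figure~\ref{fig:P10-6subgraphs} and no more.

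For the last assertion, observe that $\overline{P_{10}}[S]\cong K_{2,2,2}$ if and only if $P_{10}[S]\cong 3K_2$, i.e. if and only if $e(T)=0$, i.e. if and only if the complementary $4$-set $T$ is a maximum independent set of $P_{10}$. In the Kneser model any two of the five maximum independent sets meet in exactly one vertex, so at most one of them contains any prescribed $3$-set. It suffices to treat an induced subgraph on a $7$-set $W$, since a larger one contains a $7$-set. If every $6$-subset $W\setminus\{w\}$ induced a $K_{2,2,2}$ in $\overline{P_{10}}$, then for each of the seven vertices $w\in W$ the $4$-set $(V(P_{10})\setminus W)\cup\{w\}$ would be a maximum independent set of $P_{10}$; these seven sets are pairwise distinct, yet they all contain the fixed $3$-set $V(P_{10})\setminus W$, a contradiction. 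Hence some $6$-subset of $W$ induces a graph different from $K_{2,2,2}$.

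The only real work is the orbit enumeration of the middle paragraph: checking that the $4$-subsets of $P_{10}$ with a prescribed number of edges fall into the expected (very few) automorphism classes and that the resulting complements are pairwise non-isomorphic. This is entirely finite, and is shortened considerably by the Kneser description of $P_{10}$ (or simply verified by computer); everything else is the bookkeeping above.
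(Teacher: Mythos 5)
Your proposal is correct, and in fact it provides more than the paper does: the paper declines to prove this claim at all (``We do not prove the claim here as one can easily check its accuracy''), while you give a structured derivation. Your reduction is well organized: passing to the complementary $4$-set $T$, computing $e(\overline{P_{10}}[S])=12-e(T)$ via $3$-regularity and girth $5$, pinning the case $e(T)=0$ exactly to the five maximum independent sets (stars in the Kneser model $K(5,2)$), and leaving only a small orbit check for $e(T)\in\{1,2,3\}$. The last assertion is where you genuinely improve on ``check by inspection'': the observation that distinct maximum independent sets of $P_{10}$ meet in exactly one vertex, so at most one can contain a fixed $3$-set $V\setminus W$, immediately rules out more than one of the seven $6$-subsets of a $7$-set inducing $K_{2,2,2}$. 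That is a clean structural argument rather than raw enumeration, and it is correct (you actually prove the stronger statement that at most one $6$-subset of any $7$-set induces $K_{2,2,2}$). The only thing left informal in your write-up is the orbit enumeration establishing that $e(T)\in\{1,2,3\}$ produces exactly five further isomorphism types; that is a finite check on the order of a dozen cases, which is no worse than what the paper assumes the reader will do, and your framework (fixing $e(T)$ and using $\mathrm{Aut}(P_{10})=S_5$ on the Kneser model) makes it short.
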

We do not prove the claim here as one can easily check its accuracy.
\begin{figure}[h]
\centering
\includegraphics[width=390px]{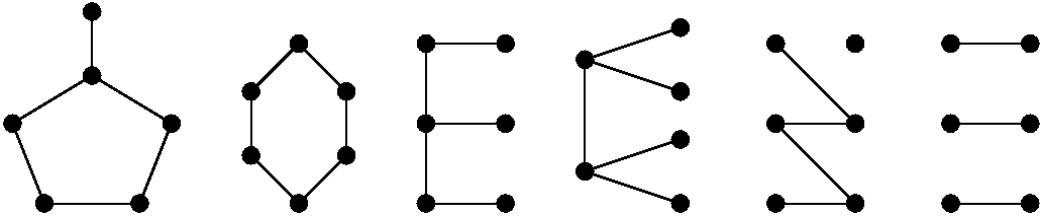}
\caption{The complements of the subgraphs of $\overline{P_{10}}$ of
  size 6 (i.e. the subgraphs of $P_{10}$ of size 6).}
\label{fig:P10-6subgraphs}
\end{figure}

\begin{lem}
The vertices of $N(u)\setminus N(s)$ (resp. of $N(u')\setminus N(s)$)
have degree at least 12. Thus in particular, $u$ and $u'$ are
non-adjacent.
\label{lem:k8-P10-deg12}
\end{lem}
\begin{proof}
We assume for contradiction that $u$ has a neighbor $v$ of degree at
most 11.  By Lemma~\ref{lem:NminDeg}, Lemma~\ref{lem:noK2222}, and
Lemma~\ref{lem:noK333}, the graph $G[N(v)]$ is isomorphic to
$\overline{P_{10}}$.

Assume $v=u_0$ in Figure~\ref{fig:P10}. Since $N(u_0) \supset
\{u,u_2,u_3,u_6,u_7,u_8,u_9\}$, the adjacencies in
$G[\{u,u_2,u_3,u_6,u_7,u_8,u_9,\}]$ allow us to denote $u$ by $u'_0$,
and denote $u'_1$, $u'_4$ and $u'_5$ the vertices in $N(u_0)\setminus
N[u]$, in such a way that these indices again correspond to
Figure~\ref{fig:P10}. It is now easy to see that contracting one edge
in each of the paths $(u_2,u'_4,u_7)$ and $(u_6,u'_5,u_9)$ creates the
edges $u_2u_7$ and $u_6u_9$ in $G[N[u]]$ and thus yields a $K_8$-minor
(by Claim~\ref{cm:2edgeP10} as $u_7u_9$ is a non-edge of
$\overline{P_{10}}$), a contradiction.
\end{proof}

The vertices $u$ and $u'$ are non-adjacent, however they can share
neighbors.  Let us prove that they cannot share more than 7 neighbors.
\begin{lem}
$|N(u) \cap N(u')|\le 7$.
\label{lem:k8-P10->=7common}
\end{lem}
\begin{proof}
Assume for contradiction that $|N(u) \cap N(u')|\ge 8$, that is
equivalently that $|N(u) \setminus N(u')|\le 2$ and $|N(u') \setminus
N(u)|\le 2$.  Note that as $\deg(s)\le 7$ the set $(N(u) \cap N(u'))
\setminus N(s)$ is non-empty, and denote $x$ one of its vertices.  By
Lemma~\ref{lem:k8-P10-deg12}, this vertex $x$ as degree at least $12$.
As it has exactly 6 neighbors in $N(u)$, at most 2 neighbors in $N(u')
\setminus N(u)$, and as it is adjacent to both $u$ and $u'$, $x$ has
at least two neighbors in $V(G) \setminus (N[u] \cup N[u'])$. Thus
there exists a component $C\neq\{s\}$ in $G \setminus (N[u] \cup
N[u'])$. As $C\neq\{s\}$ and $N(C)\subseteq N(u)\cup N(u')$,
Lemma~\ref{lem:connectk8withs} implies that $|N(C)|\ge 8$. Therefore,
as $|N(u') \setminus N(u)|\le 2$, $|N(C) \cap N(u)| \ge 6$ and there
exist a path $P$ with inner vertices in $C$ and with non-adjacent ends
in $N(u)$ (by Claim~\ref{cm:k8-P10-6subgraphs}). Let us denote $x$ and
$y$ the ends of $P$. As $|N(u) \cap N(u')|\ge 8$ and by
Claim~\ref{cm:k8-P10-6subgraphs}, there exists a vertex $z\in N(u)
\cap N(u')$ such that $z\neq x$ or $y$, and such that contracting the
edge $zu'$ creates at least two edges in $N(u)$. As these three added
edges ($xy$ and the edges adjacent to $z$) do not intersect,
Claim~\ref{cm:2edgeP10} implies that there is a $K_8$-minor, a
contradiction.
\end{proof}

As by Lemma~\ref{lem:connectk8withs} there is no $(\le 5)$-separator
$(A,B)$ with $u\in A\setminus B$ and $u'\in B\setminus A$, Menger's
Theorem implies the existence of 6 vertex disjoint paths $P_1 \ldots
P_6$ between $u$ and $u'$.
By minimizing the total length of these paths we can assume that
each vertex in $N(u)\cap N(u')$ corresponds to one of these paths,
and that any of these paths intersect $N(u)$ (resp. $N(u')$) in
only one vertex. Contracting the inner edges (those non-incident to
$u$ or $u'$) of these paths, and considering the graph induced by
$N[u]\cup N[u']$ one obtains a graph $H$ such that:
\begin{itemize}
\item $u$ and $u'$ are nonadjacent and $|N_H(u)\cap N_H(u')| = 6$ or
  $7$.
\item $\deg_H(u)=10$, and $H[N(u)]$ contains $\overline{P_{10}}$ as a
  subgraph.
\item $\deg_H(u')=10$, and $H[N(u')]$ contains $\overline{P_{10}}$ as
  a subgraph.
\end{itemize}

If the graph induced by $N_H(u)\cap N_H(u')$ is isomorphic to
$K_{2,2,2}$, then one can assume without loss of generality that
$N(u)=\{u_0,\ldots,u_9\}$ and that $N(u')=\{u_0,u'_1, u_2,u_3,
u'_4,u_5,u_6, u'_7,u'_8,u_9\}$, where the indices correspond to
Figure~\ref{fig:P10}. Now observe that contracting the edge $u_0u'$,
the path $(u_6,u'_7,u'_8)$, and the path $(u_2,u'_4,u'_1)$,
respectively create the edges $u_0u_5$, $u_6u_9$, and $u_2u_3$.  This
implies by Claim~\ref{cm:2edgeP10} that $N[u]$ contains a $K_8$-minor,
a contradiction. We can thus assume by
Claim~\ref{cm:k8-P10-6subgraphs} that the complement of $N_H(u)\cap
N_H(u')$ contains a path $(a,b,c,d)$.  As $\overline{P_{10}}$ is
6-connected, the graph induced by $\{a,b\}\cup (N_H(u')\setminus
N(u))$ is connected, and thus contains a path from $a$ to $b$.  By
Claim~\ref{cm:2edgeP10}, this path with the path $(c,u',d)$, imply
that $H$ (which is a minor of $G$) contains a $K_8$-minor, a
contradiction. Thus there is no counter-example $G$, and this
concludes the proof of the theorem.
\end{proof}

The proof Theorem~\ref{th:k8-deg9} is very similar. To do this one can
prove the following variant of Theorem~\ref{th:k8triwiths}.
\begin{thm}
Consider a connected $K_8$-minor free graph $G$ with a vertex $s$ of
degree at most 7, such that $N[s] \subsetneq V(G)$ and such that
$\min_{v\in V(G)\setminus N[s]}\ge 9$. Then $G$ has an edge
$e \in E(G) \setminus E(G[N[s]])$ that belongs to at most 5 triangles.
\end{thm}
The proof of this theorem is as the proof of
Theorem~\ref{th:k8triwiths}, except that one does not need to consider
the case where some vertex $u$ is such that $N(u)$ induces a
$K_{2,2,2,2}$.

\section{Global density of triangles}\label{sec:moytri}

In this section, we investigate the relation between the number of
triangles and the number of edge of a graph. Denotes by $\rho =
\frac{t}{m}$ the ratio between the number of triangles $t$ and the
number of edges $m$ of a graph $G$.  For each $k$, what is the minimum
number $f(k)$ such that for all graph $G$ with $\rho \ge f(k)$, $G$
contains a $K_k$ minor ?

It is easy to notice that 2-trees on $n \ge 2$ vertices have exactly
$1 + 2(n-2)$ edges and $n-2$ triangles.  Furthermore, for $k \ge 3$
one can notice that $k$-trees on $n \ge k$ vertices have exactly
$\frac{k(k-1)}{2} + k(n-k)$ edges and $\frac{k(k-1)(k-2)}{6} +
(n-k)\frac{k(k-1)}{2}$ triangles. Thus any $k$-tree, for $k \ge 2$,
verifies
\[ t = \frac{k-1}{2} m -\frac{1}{2}{{k+1}\choose{3}} .\]
Since $k$-trees are $K_{k+2}$-minor free, for all $k \ge 4$ there
exists $K_k$-minor free graphs with $\frac{k-3}{2} m
-\frac{1}{2}{{k-1}\choose{3}}$ triangles.

We deduce that for all $k \ge 4$, $f(k) \geq \frac{k-3}{2}$.  Indeed
for every $\epsilon > 0$, there exists a number $m$ and a $K_k$-minor
free graph with $m$ edges such that $\frac{k-3}{2} - \epsilon \leq
\rho < \frac{k-3}{2}$.  In fact, for $4 \le k \le 7$, the following
theorem proves that this lower bound is best possible, so we have
$f(k) = \frac{k-3}{2}$.

\begin{thm}
For $4 \le k \le 7$ (resp. $k = 8$), every graph with $m \ge 1$ edges
and $t \ge m(k-3)/2$ triangles has a $K_k$-minor (resp. a $K_8$- or a
$K_{2,2,2,2,2}$-minor).
\label{thm:global}
\end{thm}

\begin{proof}
Consider by contradiction, a non-trivial $K_k$-minor free
(resp. $K_8$- and $K_{2,2,2,2,2}$-minor free) graph $G$ with $t
\ge m(k-3)/2$ triangles.  Among the possible graphs $G$, consider one
that minimizes $m$ (given that $m \ge 1$).

Given any edge $uv \in E(G)$ let $H_{uv} = G[N(u) \cap N(v)]$ and
denote $n'$ and $m'$ its number of vertices and edges respectively.
Contracting $uv$ yields a proper minor of $G$, with exactly $1 + n'$
edges less, and with at most $n' + m'$ triangles less. Thus by
minimality of $G$, for every edge $uv$
\[ n' + m'  >  \frac{k-3}{2} (1 + n') \]
which implies that
\[ m'  >  \frac{k-3}{2} + \frac{k-5}{2} n' .\]
On the other hand we have that $\frac{n'(n' - 1)}{2} \ge m'$, and this
implies that $n'$ should verify $(n' + 1)(n' + 3 - k) > 0$, that is
that $n' \ge k - 2$. In other words, every edge $uv$ of $G$ belongs to
at least $k-2$ triangles. By Theorems~\ref{th:krtri},
(resp. Theorem~\ref{th:k8tri}), this contradicts the $K_k$-minor
freeness (resp. $K_8$- and $K_{2,2,2,2,2}$-minor freeness) of $G$.
\end{proof}

\section{Application to stress freeness of graphs}\label{sec:stress}

The motivation of this application is a problem that arises from the
study of tension and compression forces applied on frameworks in the
Euclidian space $\mathbb{R}^d$. A $d$-framework is a graph $G=(V,E)$
and an embedding $\rho$ of $G$ in $\mathbb{R}^d$. The reader should
think of a framework as an actual physical system where edges are
either straight bars or cables and vertices are articulated joints.  A
\textit{stress} on a framework $(G,\rho)$ is a function $\omega:
E(G)\, \rightarrow\; \mathbb{R}$ such that $\forall v \in V$,
\[\underset{\{u,v\}\in E}{\sum}\:\omega(\{u,v\})(\rho(v) - \rho(u)) = 0 .\]
Stress corresponds to some notion of equilibrium for the associated
physical system.  Each vertex is affected by tension and compression
forces created by the bars and cables.  $\omega(\{u,v\})$ can be
thought of as the magnitude of such force per unit length, with
$\omega(\{u,v\}) < 0$ for a cable tension and $\omega(\{u,v\}) > 0$
for a bar compression.  A stress is a state of the system where these
forces cancel each other at every vertex.  We can see that every
framework admits a \textit{trivial} stress where $\omega$ is
identically zero.  A $d$-framework admitting only the trivial stress
is called \textit{$d$-stress free}.

To make this notion independent of the embedding of $G$, the following
was introduced.  A graph $G$ is \textit{generically $d$-stress free}
if the set of all $d$-stress free embeddings of $G$ in $\mathbb{R}^d$
is open and dense in the set of all its embeddings (i.e. every
stressed embedding of $G$ is arbitrary close to a stress free
embedding).

This notion has been first used on graphs coming from $1$-skeletons of
$3$-dimensional polytopes
\cite{cauchy-13,maxwell-64,cw-93,whiteley-84}, which are planar by
Steiniz's theorem.  Gluck generalized the results on $3$-dimensional
polytopes to the whole class of planar graphs.
\begin{thm}[Gluck, 1975,~\cite{gluck-75}]
Planar graphs are generically $3$-stress free.
\label{th:gluckstress}
\end{thm}
Nevo proved that we can generalize Theorem~\ref{th:gluckstress} for
$K_5$-minor free graphs, and extended the result as follows.

\begin{thm}[Nevo, 2007,~\cite{nevo1}]
For $2 \leq r \leq 6$, every $K_r$-minor free graph is
generically $(r-2)$-stress free.
\label{th:nevostress}
\end{thm}

He conjectured this to hold also for $r = 7$ and noticed that the
graph $K_{2,2,2,2,2}$ is an obstruction for the case $r=8$. Indeed,
$K_{2,2,2,2,2}$ is $K_8$-minor free and has too many edges to be
generically $6$-stress free (a generically $\ell$-stress free graph
has at most $\ell n - {\ell+1 \choose 2}$ edges~\cite{nevo1}).  We
answer positively to Nevo's conjecture and we give a variant for the
generically $6$-stress freeness.

\begin{thm}
Every $K_7$-minor free graph (resp. $K_8$- and $K_{2,2,2,2,2}$-minor
free graph) is generically $5$-stress free (resp. $6$-stress free).
\label{th:genstressk78}
\end{thm}

The following result of Whiteley~\cite{whiteley-89} is used to derive
Theorem~\ref{th:genstressk78}.
\begin{thm}[Whiteley, 1989,~\cite{whiteley-89}]
Let $G'$ be obtained from a graph $G$ by contracting an edge $\{u,v\}$.
If $u$, $v$ have at most $d - 1$ common neighbors and $G'$ is generically
$d$-stress free, then $G$ is generically $d$-stress free.
\label{th:whitcontract}
\end{thm}

Now, we prove Theorem~\ref{th:genstressk78}.
\begin{proof}
Assume that $G$ is a $K_7$-minor free graph (resp. a $K_8$- and
$K_{2,2,2,2,2}$-minor free graph). Without loss of generality, we can
also assume that $G$ is connected.  Now, contract edges belonging to
at most $4$ (resp. 5) triangles as long as it is possible and we
denotes by $G'$ the graph obtained. Note that by construction, every
edge of $G'$ belongs to 5 (resp. 6) triangles. Note also that $G'$ is
a minor of $G$, and is thus $K_7$-minor free (resp.  $K_8$- and
$K_{2,2,2,2,2}$-minor free). Theorem~\ref{th:krtri}
(resp. Theorem~\ref{th:k8tri}) thus implies that $G'$ is the trivial
graph without any edge and with one vertex. This graph is trivially
generically $5$-stress free (resp. $6$-stress free), and so by
Theorem~\ref{th:whitcontract}, $G$ also is generically $5$-stress free
(resp. $6$-stress free).
\end{proof}

We denote by $\mu(G)$ the Colin de Verdière parameter of a graph $G$.  A
result of Colin de Verdière~\cite{cdv1} is that a graph $G$ is planar
if and only if $\mu(G) \leq 3$. Lov\'asz and Schrijver~\cite{ls1}
proved that $G$ is linklessy embeddable if and only if $\mu(G) \leq
4$.  Nevo conjectured that the following holds.

\begin{conj}[Nevo, 2007,~\cite{nevo1}]
Let $G$ be a graph and let $k$ be a positive integer. If $\mu(G) \leq k$
then $G$ is generically $k$-stress free.
\label{conj:munevo}
\end{conj}

This conjecture holds for the cases $k = 5$ and $k = 6$ as a
consequence of Theorem~\ref{th:genstressk78}.

\begin{cor}
If $G$ is a graph such that $\mu(G) \leq 5$ (resp. $\mu(G) \leq 6$) then
$G$ is generically $5$-stress free (resp. $6$-stress free).
\end{cor}

\begin{proof}
Note that $\mu(K_r) = r - 1$ and that if the complement of an
$n$-vertex graph $G$ is a linear forest, then $\mu(G) \geq n -
3$~\cite{lsv1}. So we have that $\mu(K_7) = 6$, $\mu(K_8) = 7$,
and $\mu(K_{2,2,2,2,2}) \geq 7$.

As the parameter $\mu$ is minor-monotone~\cite{cdv1}, the graph $K_7$
(resp. $K_8$ and $K_{2,2,2,2,2}$) is an excluded minor for the class
of graphs defined by $\mu(G) \leq 5$ (resp. $\mu(G) \leq 6$). Hence by
Theorem~\ref{th:genstressk78}, these graphs are generically $5$-stress
free (resp. $6$-stress free).
\end{proof}

\section{Application to double-critical $k$-chromatic graphs}\label{sec:double}

A connected $k$-chromatic graph is said to be double-critical is for
all edge $uv$ of $G$, $\chi(G \setminus\{u,v\}) = \chi(G) - 2$. It is
clear that the clique $K_k$ is such a graph.  The following
conjecture, known has the Double-Critical Graph Conjecture, due to
Erdős and Lovász states that they are the only ones.

\begin{conj}[Erdős and Lovász, 1968,~\cite{e68}]
If $G$ is a double-critical $k$-chromatic graph, then $G$ is
isomorphic to $K_k$.
\label{conj:dcgraph}
\end{conj}

This conjecture has been proved for $k \leq 5$ but remains open for $k
\geq 6$.  Kawarabayashi, Pedersen and Toft have formulated a relaxed
version of both Conjecture~\ref{conj:dcgraph} and the Hadwiger's
conjecture, called the Double-Critical Hadwiger Conjecture.

\begin{conj}[Kawarabayashi, Pedersen, and Toft, 2010,~\cite{kpt10}]
If $G$ is a double-critical $k$-chromatic graph, then $G$ contains a
$K_k$-minor.
\label{conj:dchadwiger}
\end{conj}

The same authors proved this conjecture for $k \leq 7$~\cite{kpt10},
but the case $k = 8$ is left as an open problem. Pedersen proved that
every $8$-chromatic double-critical contains $K_8^-$ as a
minor~\cite{ped11}. Below we prove that the conjecture also holds for
$k=8$.

The following proposition lists some interesting properties about
$k$-chromatic double-critical graphs :
\begin{prop}[Kawarabayashi, Pedersen, and Toft, 2010,~\cite{kpt10}]
Let $G \neq K_k$ be a double-critical $k$-chromatic graph, then
\begin{itemize}
\item The graph $G$ does not contains $K_{k-1}$ as a subgraph,
\item The graph $G$ has minimum degree at least $k + 1$,
\item For all edges $uv \in E(G)$ and all $(k-2)$-coloring of $G - u -
  v$, the set of common neighbors of $u$ and $v$ in $G$ contains
  vertices from every color class.
\end{itemize}
\label{prop:kpt}
\end{prop}

In particular, the last item implies that every edge belongs to at
least $k - 2$ triangles.

\begin{thm}
Every double-critical $k$-chromatic graph, for $k\le 8$, contains
$K_k$ as a minor.
\end{thm}
\begin{proof}
Consider for contradiction a $K_k$-minor free graph $G$ that is
double-critical $k$-chromatic. By the second item of
Proposition~\ref{prop:kpt}, $\delta(G)\ge k+1$. By
Theorem~\ref{th:krtri} and Theorem~\ref{th:k8-deg9}, this graph
has an edge that belongs to at most $k-3$ triangles. This
contradicts the last item of Proposition~\ref{prop:kpt}.
\end{proof}

Let us now give an alternative proof of the case $k=8$ that does not
need Theorem~\ref{th:k8-deg9}, but uses Theorem~\ref{th:k8tri}
instead. This might be usefull to prove the next case of
Conjecture~\ref{conj:dchadwiger}.

Consider for contradiction a $K_8$-minor free graph $G$ that is
double-critical $8$-chromatic.  By Theorem~\ref{th:k8tri} this graph
has an edge that belongs to at most $5$ triangles or contains
$K_{2,2,2,2,2}$ as an induced subgraph.  By Proposition~\ref{prop:kpt}
every edge of $G$ belongs to at least 6 triangles, thus $G$ contains
$K_{2,2,2,2,2}$ as an induced subgraph.  Let us denote $K\subseteq
V(G)$ the vertex set of a copy of $K_{2,2,2,2,2}$ in $G$. As
$K_{2,2,2,2,2}$ is maximal $K_8$-minor free, any connected component
$C$ of $G\setminus K$ is such that $N(C)\subset K$ induces a clique.
As $G$ is double-critical $8$-chromatic, there exists a $6$-coloring
of $G[N[C]]$, and a $6$-coloring of $G\setminus C$. As these two
graphs intersect on a clique one can combine their colorings and thus
obtain a 6-coloring of $G$, a contradiction.

\section{Application for coloration of $K_d$-minor free graphs}\label{sec:coloration}

Hadwiger's conjecture says that every $t$-chromatic graph $G$
(i.e. $\chi(G) =t$) contains $K_t$ has a minor. This conjecture has
been proved for $t \leq 6$, where the case $t = 5$ is equivalent to
the Four Color Theorem by Wagner's structure theorem of $K_5$-minor
free graph, and the case $t = 6$ has been proved by Robertson, Seymour
and Thomas~\cite{rst1}. The conjecture remains open for $t \geq 7$.
For $t = 7$ (resp. $t = 8$) the conjecture asks $K_7$-minor free graphs
(resp. $K_8$-minor free graphs) to be $6$-colorable (resp. $7$-colorable).
Using Claim~\ref{claim-alpha-Nv} and the $9$-degeneracy
(resp. $11$-degeneracy) of these graphs, one can prove that they are
$9$-colorable (resp. 11-colorable). We improve these bounds by one.

A graph $G$ is said to be \emph{$t$-minor-critical} if $\chi(G) = t$
and $\chi(H) < t$ whenever $H$ is a strict minor of $G$.  Hadwiger's
conjecture can thus be reformulated as follows : Every
$t$-minor-critical graph contains $K_t$ has a minor.  In the following
$\alpha(S)$ means $\alpha(G[S])$,the independence number of
$G[S]$. The following is a folklore claim, here for completeness.

\begin{claim}
\label{claim-alpha-Nv}
Given a $k$-minor critical graph $G$, for every vertex $v\in V(G)$ we
have that $\deg(v) + 2 - \alpha(N(v)) \ge k$.
\end{claim}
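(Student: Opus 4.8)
The plan is to prove Claim~\ref{claim-alpha-Nv} by a short colouring argument, exploiting minor-criticality to delete the neighbourhood of $v$ and then greedily re-insert $v$.

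First I would use the fact that $G$ is $k$-minor-critical to observe that $G$ itself is not $(k-1)$-colourable, but every proper minor of $G$ is. In particular, for a fixed vertex $v$, the graph $G$ with an independent set $S\subseteq N(v)$ contracted onto a single vertex $v^\star$ is a proper minor of $G$ (provided $S$ is non-empty, which it is since $v$ has neighbours), hence admits a proper $(k-1)$-colouring. Equivalently — and this is the formulation I would actually run with — consider a maximum independent set $S$ in $G[N(v)]$, so $|S| = \alpha(N(v))$, and contract the edge from $v$ to one vertex of $S$ after first contracting $S$ to a point; more cleanly, contract $S\cup\{v\}$ to a single vertex. The resulting graph $G'$ is a strict minor of $G$, so $\chi(G') \le k-1$. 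Fix such a colouring $c$ of $G'$.

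The key step is the colour-counting. In $G'$, the contracted vertex $w$ (the image of $S\cup\{v\}$) is adjacent exactly to the vertices of $N[S\cup\{v\}] \setminus (S\cup\{v\})$, which is contained in $N(v)\setminus S$ together with neighbours of $S$ outside $N[v]$. Pulling the colouring $c$ back to $G$: colour every vertex of $V(G)\setminus N[v]$ as in $G'$ (with the vertices of $S$ all receiving $c(w)$ — legitimate since $S$ is independent and $S\subseteq N(v)$ means no vertex of $S$ is adjacent to a vertex outside $N[v]$ other than through $w$'s old neighbours, which all avoid colour $c(w)$). This properly colours $G\setminus(N(v)\setminus S)$ with $k-1$ colours, and moreover $v$ is still uncoloured: we have used $c(w)$ on $S\ni$ neighbours of $v$, so $v$ cannot take colour $c(w)$, but the $|N(v)\setminus S|$ remaining neighbours of $v$ forbid at most $|N(v)\setminus S| = \deg(v) - \alpha(N(v))$ further colours. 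Hence if $(k-1) - 1 - \big(\deg(v)-\alpha(N(v))\big) \ge 1$, i.e.\ if $\deg(v) - \alpha(N(v)) \le k-3$, we could extend the colouring to $v$ and then to the vertices of $N(v)\setminus S$ one at a time only if those also fit — so I would instead argue contrapositively: since $G$ is $k$-minor-critical it is not $(k-1)$-colourable, yet the partial colouring above would extend greedily to a $(k-1)$-colouring of all of $G$ whenever $\deg(v) + 2 - \alpha(N(v)) \le k-1$, forcing $\deg(v)+2-\alpha(N(v)) \ge k$.

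The main obstacle is being careful that the greedy extension of the partial $(k-1)$-colouring back to the vertices of $N(v)\setminus S$ and to $v$ actually goes through with the stated budget — one must order $N[v]\setminus S$ suitably (colour $v$ last) and check that each vertex of $N(v)\setminus S$ sees at most $k-2$ already-coloured neighbours, which follows because contracting $S\cup\{v\}$ did not increase any degree outside this set and because $v$'s own colour only gets committed at the very end. Packaging this as "a strict minor is $(k-1)$-colourable, so the colouring lifts" keeps the bookkeeping minimal, and the inequality $\deg(v)+2-\alpha(N(v)) \ge k$ drops out as exactly the threshold at which the lift fails.
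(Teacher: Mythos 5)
Your overall strategy is exactly the paper's: take a maximum independent set $S$ of $G[N(v)]$, contract $S\cup\{v\}$ to a single vertex to obtain a strict minor $G'$, use the resulting $(k-1)$-colouring of $G'$, lift it back and count colours in $N(v)$. Up to that point you and the paper agree.

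The gap is in the lifting step. You deliberately discard the colours that $c$ assigns to the vertices of $N(v)\setminus S$ (you only carry over $c$ on $V(G)\setminus N[v]$ and put $c(w)$ on all of $S$), and then propose to recover the colouring of $N(v)\setminus S$ by a greedy pass. That greedy pass is not justified: a vertex $x\in N(v)\setminus S$ may well have $\deg_G(x)\ge k-1$, so when you reach $x$ it can already see $k-1$ distinct colours on its previously coloured neighbours and the greedy step fails. Your stated justification --- that ``contracting $S\cup\{v\}$ did not increase any degree outside this set'' --- only says $\deg_{G'}(x)\le\deg_G(x)$, which is the wrong direction: it gives no upper bound on the number of already-coloured neighbours of $x$ in $G$. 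Uncolouring $N(v)\setminus S$ and re-colouring it greedily is an extra complication that you cannot pay for with the budget $k-1$.

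The fix is simple and is precisely what the paper does: the vertices of $N(v)\setminus S$ survive the contraction unchanged, so $c$ already properly colours them in $G'$, and one should just \emph{keep} those colours. Then the lifted colouring is a genuine proper $(k-1)$-colouring of all of $G\setminus\{v\}$ (properness between $S$ and its neighbours outside $S\cup\{v\}$ --- whether in $N(v)\setminus S$ or in $V(G)\setminus N[v]$ --- follows since all such neighbours are adjacent to $w$ in $G'$ and hence avoid $c(w)$). In this colouring $N(v)$ receives at most $1+(\deg(v)-|S|)=\deg(v)+1-|S|$ colours, so if $\deg(v)+2-|S|\le k-1$ a colour is free for $v$, contradicting $\chi(G)=k$. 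Hence $\deg(v)+2-\alpha(N(v))\ge k$. Your instinct was right; you just should not have thrown away the colours you already had.
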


\begin{proof}
Given a vertex $v$ and a stable set $S$ of $N(v)$, consider the graph
$G'$ obtained from $G$ by contracting the edges between $v$ and
$S$. Since $G'$ is a strict minor of $G$ it is
$(k-1)$-colorable. Given such coloring of $G'$, one can $(k-1)$-color
$G\setminus\{v\}$ in such a way that all the vertices of $S$ have the
same color assigned. In this coloring at most $\deg(v) +1 - |S|$
colors are used in $N(v)$, thus $\deg(v) +2 - |S|$ colors
are sufficient to color $G$, and thus $\deg(v) + 2 - \alpha(N(v)) \ge k$.
\end{proof}

A \emph{split graph} is a graph which vertices can be partionned into
one set inducing a clique, and one set inducing an independent set.
These graphs are the graphs that do not contain $C_4$, $C_5$ or $2K_2$
as induced subgraphs~\cite{fh1}.

\begin{claim}
\label{claim-no-split}
Given a $k$-minor critical graph $G$, every separator $(A,B)$ of $G$
is such that $G[A \cap B]$ is not a split graph (i.e $G[A \cap B]$
contains $C_4$, $C_5$ or $2K_2$ as an induced subgraph).
\end{claim}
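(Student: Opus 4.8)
The statement to prove is Claim~\ref{claim-no-split}: in a $k$-minor-critical graph $G$, no separator $(A,B)$ can have $G[A\cap B]$ a split graph. The plan is a direct contradiction argument exploiting minor-criticality exactly as in the proof of Claim~\ref{claim-alpha-Nv}. First I would set $S = A\cap B$ and suppose, toward a contradiction, that $G[S]$ is a split graph, so $V(S)$ partitions into a clique $Q$ and an independent set $I$. Since $(A,B)$ is a separation, both $G_A := G[A]$ and $G_B := G[B]$ are strict minors of $G$ (each misses at least one vertex, namely a vertex of $B\setminus A$ resp.\ $A\setminus B$), hence each is $(k-1)$-colorable.

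\textbf{Key steps.} The heart of the argument is to glue a $(k-1)$-coloring of $G_A$ and one of $G_B$ into a $(k-1)$-coloring of $G$, contradicting $\chi(G)=k$. The only obstruction to gluing colorings of two pieces along a common part $S$ is that the two colorings may disagree on $S$; they can be reconciled by a color permutation provided $G[S]$ has few enough ``independent color classes,'' i.e.\ provided a proper $(k-1)$-coloring of $G[S]$ can always be extracted in a canonical form. Concretely: since $G[S]$ is a split graph with clique $Q$ and stable set $I$, it has at most $|Q|+1 \le k-1$ color classes used in any proper coloring when $|Q|\le k-2$ — but we cannot assume that bound directly. The cleaner route is: take a $(k-1)$-coloring $c_A$ of $G_A$ and a $(k-1)$-coloring $c_B$ of $G_B$. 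Restrict both to $S$. Because $G[S]$ is a split graph, \emph{every} proper coloring of $G[S]$ assigns distinct colors to $Q$ and the coloring is essentially determined by which color each vertex of $Q$ and each vertex of $I$ receives; crucially, one shows any two proper colorings of a split graph using the same number of colors are related by a permutation of the color set together with possible recoloring of independent vertices — and such a ``local'' recoloring of the stable part lifts through the clique sum. After applying a suitable permutation $\pi$ of $\{1,\dots,k-1\}$ to $c_B$ so that $\pi\circ c_B$ agrees with $c_A$ on $Q$, one then recolors the stable vertices of $I$ in one side so the colorings agree on all of $S$; this uses that each $x\in I$ has a free color on whichever side we are adjusting, which follows from $|N_S(x)|\le |Q| \le k-2$ (as $N_S(x)$ is a clique inside $Q$, $G[S]$ being split). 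The merged map is then a proper $(k-1)$-coloring of $G$, the desired contradiction.

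\textbf{Main obstacle.} The delicate point — and the step I would spend the most care on — is the reconciliation: matching a $(k-1)$-coloring of $G_A$ with one of $G_B$ on the split separator $S$. The naive claim ``split graphs admit a unique coloring up to permutation'' is false in general (independent vertices of low degree have color choices), so the argument must instead show that the degrees of freedom are exactly the recolorings of the stable part, and that these are precisely the moves that survive a clique sum. Equivalently, one argues that $\chi(G)\le\max\{\chi(G_A),\chi(G_B)\}$ whenever $G$ is a ``near-clique sum'' of $G_A,G_B$ over a split graph, because a split graph has the property that it is a ``perfect gluing interface''. I expect this to reduce, after unwinding, to the elementary fact that if $H=G[S]$ is split then $\chi(H)=\omega(H)$ and any $(k-1)$-coloring of $H$ can be transformed, by permuting colors and recoloring stable vertices, into agreement with any other — and these operations extend to $G_A$ and $G_B$ without creating conflicts, since in $G_A$ (resp.\ $G_B$) the only neighbors of a vertex of $I$ inside $S$ lie in the clique $Q$. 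Once that lemma is in place, the contradiction with $k$-minor-criticality is immediate.
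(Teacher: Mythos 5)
Your plan---use $k$-minor-criticality to get $(k-1)$-colorings of the two sides and glue them along the split separator $S$---is the right starting point, but the reconciliation step has a genuine gap. After permuting $c_B$ to agree with $c_A$ on the clique $Q$, you propose recoloring each $x\in I$ on one side to the color $c_A(x)$, justified by $|N_S(x)|\le|Q|\le k-2$. This fails for two reasons. First, the existence of \emph{some} available color is not the same as the availability of the \emph{specific} color $c_A(x)$, which is what agreement on $S$ requires. Second, and more fundamentally, a vertex $x\in I$ on the separator may have arbitrarily many neighbors in $B\setminus A$, and recoloring $x$ in $G[B]$ must avoid conflicts with \emph{all} of $N_{G[B]}(x)$, not only $N_S(x)$; nothing controls those colors. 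A split separator is simply not a ``perfect gluing interface'' in the way a clique separator is, and local recoloring of $I$ cannot in general repair the mismatch.

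The paper's proof instead turns the split separator into a clique separator by contraction, which is the missing idea. It first refines $(A,B)$ so that $S=A\cap B$ is minimal (still split, since the property is hereditary) and both $A\setminus S$, $B\setminus S$ contain a component $C_A$, resp.\ $C_B$, with $N(C_A)=N(C_B)=S$; fix a maximum independent set $I$ of $G[S]$ with $K=S\setminus I$ a clique. It then takes $G_A=G[A]/I$ and $G_B=G[B]/I$, identifying all of $I$ into a single vertex $i$; since $C_B$ (resp.\ $C_A$) is connected and sees all of $I$, the identification is realized by contractions in $G$, so $G_A$ and $G_B$ are strict minors, and by maximality of $I$ the set $K\cup\{i\}$ is a clique in both. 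Any $(k-1)$-colorings of $G_A$, $G_B$ lift to $(k-1)$-colorings of $G[A]$, $G[B]$ in which $I$ is monochromatic and $K\cup\{i\}$ is rainbow; such colorings are rigid on $S$, so a single color permutation reconciles them, and the glued map is a proper $(k-1)$-coloring of $G$, contradicting $\chi(G)=k$.
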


\begin{proof}
Assume by contradiction that there exists such separator $(A',B')$.
This implies the existence of a separator $(A,B)$ such that $S = A\cap
B \subseteq A'\cap B'$ , and such that each $G[A\setminus S]$ and
$G[B\setminus S]$ have a connected component, $C_{A}$ and $C_{B}$ such
that $N(C_{A})=N(C_{B})=S$. Note that $G[S]$ is a split graph and let
$I$ be one of its maximum independent sets and let $K=S \setminus I$
be a clique. Let $G_A$ and $G_B$ be the graphs respectively obtained
from $G[A]$ and $G[B]$ by identifying the vertices of $I$ into a
single vertex $i$. By maximality of $I$, in both graphs the vertex set
$K\cup \{i\}$ induces a clique.  Furthermore, these graphs are strict
minors of $G$ as the identification of the vertices in $I$ can be done
by contracting edges incident to $C_B$ or $C_A$ respectively.  Thus,
these graphs are $(k-1)$-colorable and these colorings imply the
existence of compatible $(k-1)$-colorings of $G[A]$ and $G[B]$, since
in both colorings the vertices of $I$ use the same color, and each
vertex of $K$ uses a distinct color. This yields in a $(k-1)$-coloring
of $G$, a contradiction.
\end{proof}

\begin{thm}
$K_7$-minor free graphs are $8$-colorable.
$K_8$-minor free graphs are $10$-colorable.
\end{thm}

\begin{proof}
Consider by contradiction that there is a $K_7$-minor free graph $G$
non-8-colorable (resp. a $K_8$-minor free graph $G$ non-10-colorable).
This graph is chosen such that $|E(G)|$ is minimal, this graph is
thus 9-minor-critical (resp. 11-minor-critical).

For any vertex $v$, since $\alpha(N(v))$ is at least 1,
Claim~\ref{claim-alpha-Nv} implies that $\deg(v) > 7$ (resp. $\deg(v)
> 9$).  If $\deg(v) = 8$ (resp. $\deg(v) = 10$), since $G$ is
$K_7$-minor free (resp. $K_8$-minor free), we have $\alpha(N(v)) \ge
2$, contradicting Claim~\ref{claim-alpha-Nv}. Finally if $\deg(v) = 9$
(resp. $\deg(v) = 11$), Claim~\ref{claim-alpha-Nv} implies that
$3>\alpha(N(v))$, and since $N(v)$ cannot be a clique, $\alpha(N(v)) =
2$. Thus with Mader's theorem we have that $\delta(G)= 9$
(resp. $\delta(G)= 11$), and that for every vertex $v$ of degree 9
(resp. of degree 11), $\alpha(N(v)) = 2$.  By Theorem~\ref{th:krtri}
(resp. Theorem~\ref{th:k8triweak}), we consider a vertex $u$ of degree
9 (resp. 11) such that there is an edge $uv$ which belongs to at most
$4$ (resp. $5$) triangles. Let $H = G[N(u)]$, and recall that
$\alpha(H) = 2$.

\begin{claim}
\label{claim-no-K5}
The graph $H = G[N(u)]$ does not contain a $K_5$ (resp. a $K_6$).
\end{claim}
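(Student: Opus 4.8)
The plan is to reduce the claim to the statement that $G$ contains no $K_6$ as a subgraph (resp.\ no $K_7$), and then to get the latter from the $K_7$-minor-freeness (resp.\ $K_8$-minor-freeness) of $G$ together with Claim~\ref{claim-no-split}. For the reduction, I would note that if $H=G[N(u)]$ contained a $K_5$ (resp.\ a $K_6$) on a vertex set $F$, then since $u$ is adjacent to every vertex of $N(u)\supseteq F$, the set $F\cup\{u\}$ would induce a $K_6$ (resp.\ a $K_7$) in $G$; so it suffices to prove that $G$ has no clique on $6$ (resp.\ $7$) vertices.

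The key observation I would extract from Claim~\ref{claim-no-split} is that no clique can be a separator of $G$: a complete graph on any number of vertices is a split graph (its vertex set is a clique and the complementary independent set is empty), so if a separation $(A,B)$ of $G$ had $A\cap B$ inducing a clique, Claim~\ref{claim-no-split} would be violated. Consequently, for every clique $W\subsetneq V(G)$ the graph $G\setminus W$ is connected: if it had two components $D_1$ and $D_2$, then $(D_1\cup W,\ V(G)\setminus D_1)$ would be a separation of $G$ whose separator is the clique $W$, a contradiction. Here $V(G)\setminus W$ is nonempty, since $|V(G)|\ge\delta(G)+1\ge 10$ (resp.\ $\ge 12$) while $|W|\le 7$.

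Now suppose for contradiction that $G$ has a clique $W$ with $|W|=6$ (resp.\ $|W|=7$). By the previous paragraph $T:=V(G)\setminus W$ is nonempty and connected. Moreover, each $w\in W$ has $\deg(w)\ge\delta(G)=9$ (resp.\ $11$) but at most $|W|-1=5$ (resp.\ $6$) neighbours inside $W$, so $w$ has at least $4$ (resp.\ $5$), in particular at least one, neighbour in $T$. Taking as branch sets the singletons $\{w\}$ for $w\in W$ together with the single branch set $T$ then exhibits a $K_7$-minor (resp.\ a $K_8$-minor) of $G$, contradicting the choice of $G$. Hence $G$ has no such clique, and by the reduction $H$ contains no $K_5$ (resp.\ no $K_6$).

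The only delicate points are the two ingredients of the middle paragraph: verifying that cliques are split graphs (so that Claim~\ref{claim-no-split} forbids a clique separator and forces $G\setminus W$ connected), and verifying that $G\setminus W$ is nonempty so that the contraction is meaningful; both are immediate from the definitions and from $\delta(G)=9$ (resp.\ $11$). I would also remark that this argument uses none of the special features of $u$ (its degree, the equality $\alpha(N(u))=2$, or the edge $uv$ lying in few triangles); those will only be needed in the subsequent claims.
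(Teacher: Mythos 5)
Your proof is correct, and it takes a genuinely different and in fact cleaner route than the paper's.  You prove the stronger statement that the $k$-minor-critical graph $G$ has no $K_{t+1}$ subgraph at all ($t=5$ or $6$), by observing that Claim~\ref{claim-no-split} forbids clique separators: a clique $W$ is a split graph, so $G\setminus W$ is connected (and nonempty, as $|V(G)|\ge\delta(G)+1$), and since $\delta(G)=9$ (resp.\ $11$) every vertex of $W$ has a neighbour in $G\setminus W$, whence contracting $G\setminus W$ to a single vertex yields a $K_{t+2}$-minor.  The paper instead fixes a $K_t$ on $\{x_1,\ldots,x_t\}\subseteq N(u)$ and splits according to whether $Y=N(u)\setminus\{x_1,\ldots,x_t\}$ induces a connected graph: when $Y$ is connected it invokes Claim~\ref{claim-no-split} locally (on each component of $G\setminus N[u]$ meeting some $x_i$) to route the branch set through $Y$; when $Y$ is disconnected it runs a Kempe-chain argument on a $(2t-2)$-colouring of the minor obtained by contracting $uy_1$ and $uy_2$ for non-adjacent $y_1,y_2\in Y$.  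Your observation that $V(G)\setminus\{u,x_1,\ldots,x_t\}$ is connected (equal to $Y\cup(V(G)\setminus N[u])$) short-circuits both cases and dispenses entirely with the Kempe-chain machinery; you correctly note that none of the special properties of $u$ (its being a vertex of degree $9$, the independence number $\alpha(N(u))=2$, or the edge $uv$ in few triangles) are needed here, only minor-criticality and the minimum-degree bound.
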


\begin{proof}
Assume by contradiction that $H$ contains a $K_t$ with vertices
$x_1,\ldots,x_t$, for $t=5$ (resp. for $t=6$). Assume first that the
graph induced by $Y = N(u) \setminus \{x_1,\ldots,x_t\}$ is
connected. Since $\delta(G) \ge 9$ every vertex $x_i$ has a neighbor
in $Y$ or a neighbor $w_i$ in $G \setminus N[u]$. In the latter case,
denote $C_i$ the connected component of $w_i$ in $G\setminus N[u]$.
Since by Claim~\ref{claim-no-split} (for the partition $(N[C_i], V(G)
\setminus C_i)$) $N(C_i)$ intersects $Y$, one can contract $Y\cup
(V(G) \setminus N[u])$ into a single vertex and form a $K_{t+2}$
together with vertices $u,x_1,\ldots,x_t$, a contradiction.

Assume now that the graph induced by $Y$ is not connected and let
$y_1, y_2 \in Y$ be non-adjacent vertices. Since $G$ is $(2t-1)$-minor
critical, consider a $(2t-2)$-coloring of the graph $G'$ obtained from
$G$ by contracting $uy_1$ and $uy_2$. This coloring implies the
existence of a $(2t-2)$-coloring $c$ of $G\setminus u$ such that
$c(y_1) = c(y_2)$. As this coloring does not extends to $G$, the
$2t-1$ vertices in $N(u)$ use all the $(2t-2)$ colors. This implies
that the colors used for the $x_i$ are used only once in $N(u)$, and
that there exists a vertex $z \in Y$ which color is used only once in
$N(u)$. Assume $c(x_i)=i$ and $c(z)=7$. Given two colors $a$, $b$ and
a vertex $v$ colored $a$, the \emph{$(a,b)$-component of $v$} is the
the connected component of $v$ in the graph induced by $a$- or
$b$-colored vertices. For any $1\le i \le t$, suppose we switch colors
in the $(i,7)$-component of $z$. As this cannot lead to a coloring
which does not use all the colors in $N(u)$, there exists a
$(7,i)$-bicolored path from $z$ to $x_i$. This is impossible as
contracting these paths on $z$ would lead to a $K_{t+2}$ (with vertex
set $\{u,z,x_1,\ldots,x_t\}$). This concludes the proof of the claim.
\end{proof}

Let $v$ be a vertex of $H$ with minimum degree in $H$.  By the choice
of $u$ and Theorem~\ref{th:krtri} (resp. Theorem~\ref{th:k8triweak}),
$\deg_{H}(v)\le 4$ (resp. $\deg_{H}(v)\le 5$).

\begin{claim}
$\delta(H) = \deg_H(v) = 4$ (resp. $\delta(H) = \deg_H(v) = 5$).
\end{claim}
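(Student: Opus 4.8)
The plan is to read off the claim from the two structural facts already in hand about $H = G[N(u)]$, namely $\alpha(H) = 2$ and (by Claim~\ref{claim-no-K5}) that $H$ contains no $K_5$ (resp. no $K_6$), together with the upper bound on $\deg_H(v)$ coming from the choice of the edge $uv$.

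First I would pass to the complement. Since $u$ has degree $9$ (resp. $11$), the graph $H$ has exactly $9$ (resp. $11$) vertices, and $\alpha(H) = 2$ says precisely that $\overline{H}$ is triangle-free. Let $v$ be the vertex of minimum degree in $H$ (the one furnished just above the claim). In $\overline{H}$ the vertex $v$ has $8 - \deg_H(v)$ (resp. $10 - \deg_H(v)$) neighbours, and since $\overline{H}$ is triangle-free, $N_{\overline{H}}(v)$ is an independent set of $\overline{H}$, i.e. a clique of $H$. Thus the set of non-neighbours of $v$ in $H$ induces a complete subgraph of $H$ of order $8 - \deg_H(v)$ (resp. $10 - \deg_H(v)$).

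Now Claim~\ref{claim-no-K5} forbids a $K_5$ (resp. a $K_6$) in $H$, so this clique has order at most $4$ (resp. at most $5$), which gives $\deg_H(v) \ge 4$ (resp. $\ge 5$). In the other direction, by the choice of $u$ and Theorem~\ref{th:krtri} (resp. Theorem~\ref{th:k8triweak}) the edge $uv$ lies in at most $4$ (resp. $5$) triangles, i.e. $\deg_H(v) \le 4$ (resp. $\le 5$). Combining the two inequalities forces $\deg_H(v) = 4$ (resp. $= 5$), and since $v$ realizes the minimum degree of $H$ this is exactly $\delta(H)$.

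I do not expect a genuine obstacle here; the argument is essentially a one-line complementation count. The only points requiring care are the bookkeeping $|V(H)| = \deg(u)$ and keeping straight that it is the \emph{non}-neighbours of $v$ in $H$ that form a clique (because triangle-freeness of $\overline{H}$ turns neighbourhoods in $\overline{H}$ into cliques in $H$), so that Claim~\ref{claim-no-K5} is being applied to the right set.
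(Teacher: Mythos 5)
Your proof is correct and follows the same line as the paper's: the paper observes directly that $\alpha(H)=2$ forces the non-neighbours of $v$ in $H$ to form a clique, and bounds its size by Claim~\ref{claim-no-K5}, which is exactly what you derive (phrased via the complement graph); the bookkeeping $|V(H)|=\deg(u)$ and the combination with the upper bound $\deg_H(v)\le 4$ (resp.\ $\le 5$) are identical.
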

\begin{proof}
Since $\alpha(H)=2$, the non-neighbors of $v$ in $H$ form a
clique. Furthermore since $H$ does not contain a $K_5$ (resp. a $K_6$)
we have that $9 - 1 - \deg_{H}(v) < 5$ (resp. that $11 - 1 -
\deg_{H}(v) < 6$), and hence $\deg_{H}(v) = 4$ (resp. $\deg_{H}(v) =
5$).
\end{proof}

Let $y_1,\ldots,y_t$ with $t = 4$ (resp. $t = 5$) be the neighbors of
$v$ in $H$, and let $K$ be the $t$-clique formed by its non-neigbors.
By Claim~\ref{claim-no-K5} we can assume that $y_1$ and $y_2$ are
non-adjacent. Note that since $\alpha(G[N(u)]) = 2$ every vertex of
$K$ is adjacent to $y_1$ or $y_2$. Since $G$ is $(2t+1)$-minor
critical, consider a $2t$-coloring of the graph $G'$ obtained from $G$
by contracting $uy_1$ and $uy_2$. This coloring implies the existence
of a $2t$-coloring $c$ of $G \setminus u$ such that $c(y_1) = c(y_2)$. As
this coloring does not extends to $G$, the $2t + 1$ vertices in $N(u)$
use all the $2t$ colors. In particular, the colors used by $K$ (say
$1,\ldots t$) and $y_3$ (say 6) are thus used only once in $N(u)$.
For any $1\le i\le t$, suppose we switch colors in the $(i,6)$-component
of $y_3$. As this cannot lead to a coloring which does not use all the
colors in $N(u)$, there exists a $(i,6)$-bicolored path from $y_3$ to
the $i$-colored vertex of $K$. This is impossible as contracting these
paths on $y_3$, and contracting the edges $vy_1$ and $vy_2$ on $v$
would lead to a $K_{t+2}$ with vertex set $\{u,v,y_3\} \cup K$. This
concludes the proof of the theorem.
\end{proof}

\section{Conclusion}\label{sec:concl}

Theorem~\ref{thm:global} gives a sufficient condition for a graph to
have a $K_k$-minor. We wonder whether this condition is stronger than
Mader's Theorem : Is there a graph $G$ with a $K_k$-minor, for $4\le
k\le 7$, that has $m\le (k-2)n - {k-1 \choose 2}$ edges and $t\ge
m(k-3)/2$ triangles ?

We believe that our work can be extended to the next case.  Song and
Thomas~\cite{st1} proved a Mader-like theorem, similar to
Theorem~\ref{th:jorg} in the case of $K_9$-minor free graphs.

\begin{thm}[Song and Thomas, 2006,~\cite{st1}]
Every graph on $n \geq 9$ vertices and at least $7n - 27$ edges either
has a $K_9$-minor or is a $(K_{1,2,2,2,2,2}, 6)$-cockade or is isomorphic to
$K_{2,2,2,3,3}$.
\end{thm}

Note that $K_{2,2,2,3,3}$ has edges that belong to exactly $6$ triangles
and contains $K_{2,2,2,2,2,1}$ as a minor. We conjecture that we
can extend our main theorem as follows.
\begin{conj}
Let $G$ a graph such that every edge belongs to at least $7$ triangles
then either $G$ has a $K_9$-minor or contains $K_{1,2,2,2,2,2}$
as an induced subgraph.
\label{conj:extendk9}
\end{conj}

Proving this conjecture would have several consequences.
This would extend Theorem~\ref{thm:global}
as follows : Every graph $G$ with $m\ge 1$ edges and $t\ge 3m$ triangles
has a $K_9$ or $K_{1,2,2,2,2,2}$-minor.
It would also imply Conjecture~\ref{conj:munevo} for the case $k = 7$,
i.e.  $\mu(G)\le 7$ implies that $G$ is generically $7$-stress free.
Finally, it would imply Conjecture~\ref{conj:dchadwiger} for $k = 9$,
i.e. double-critical $9$-chromatic graphs have a $K_9$-minor.
We also conjecture that the following holds. In particular, it would
imply that $K_9$-minor free graphs are $12$-colorable (using the same
arguments as in Section \ref{sec:coloration}).
\begin{conj}
Any $K_9$-minor free graph $G$ with $\delta(G)=13$ has an edge $uv$
such that $u$ has degree $13$ and $uv$ belongs to at most $6$
triangles.
\label{conj:extendk9weak}
\end{conj}

We also believe that these structural properties on graph with edges
belonging to many triangles can actually be extended to
matroids. Graph minors can be studied in the more general context of
matroid minors~\cite{o1}. A triangle is then a circuit of size
$3$. Contrary to graphs, the case when every element of the matroid
belongs to $3$ triangles is already intricate. There are three
well-known matroids for which each element belongs to $3$ triangles :
the Fano matroid $F_7$, the uniform matroid $\mathcal{U}_{2,4}$, and
the graphical matroid $\mathcal{M}(K_5)$.  We conjecture that the
following holds.

\begin{conj}
Let $\mathcal{M}$ be a matroid where each element is contained in $3$
triangles, then $\mathcal{M}$ admits $\mathcal{M}(K_5)$, $F_7$ or
$\mathcal{U}_{2,4}$ as a minor.
\end{conj}

\bibliographystyle{plain}
\nocite{*}
\bibliography{biblio}
\end{document}